
\documentclass[11pt]{amsart}

\usepackage{url} 

\usepackage{amsmath}
\usepackage{amsfonts}
\usepackage{amsthm} 
\usepackage{enumitem} 
\usepackage{tikz-cd} 
\usepackage{hyperref} 
\usepackage{mathtools} 
\usepackage{amssymb} 
\usepackage{quiver} 
\usepackage{extarrows}

\theoremstyle{definition}
\newtheorem{thm}{Theorem}[section] 
\newtheorem*{thm*}{Theorem} 
 
\newtheorem*{definition*}{Definition}
\newtheorem{lemma}[thm]{Lemma}
 
\newtheorem{prop}[thm]{Proposition}

\usepackage{parskip} 
\setlength{\parskip}{.5em}
\setlength{\parindent}{0em}

\DeclareMathOperator{\Top}{Top}
\DeclareMathOperator{\Colim}{colim}
\DeclareMathOperator{\Set}{Set}
\DeclareMathOperator{\Hom}{Hom}
\DeclareMathOperator{\Free}{Free}
\DeclareMathOperator{\id}{id}
\DeclareMathOperator{\pt}{pt}

\title{Elmendorf's Theorem for Diagrams}
\author{Hannah Housden}
\address{
Department of Mathematics,
Vanderbilt University,
Nashville, TN 37240 USA
}
\email{hannah.housden@vanderbilt.edu}

\begin{document}

\begin{abstract}
    The notion of a continuous $G$-action on a topological space readily generalizes to that of a continuous $D$-action, where $D$ is any small category. Dror Farjoun and Zabrodsky introduced a generalized notion of orbit, which is key to understanding spaces with continuous $D$-action. We give an overview of the theory of orbits and then prove a generalization of ``Elmendorf's Theorem,'' which roughly states that the homotopical data of of a $D$-space is precisely captured by the homotopical data of its orbits.
\end{abstract}

\thanks{This material is based upon work supported by the National Science Foundation under Grant No. DMS-1811189.}

\maketitle

\section{Introduction}
Equivariant homotopy theory is the study of topological spaces with the action of a (usually finite) group $G$. Any $G$-space $X$ automatically inherits an action via any subgroup $H \leq G$, and the corresponding fixed-point subspaces $X^H$ are key to understanding the structure of $X$. One concrete way to see this is via the celebrated result of ``Elmendorf's theorem,'' which was originally proven by Elmendorf \cite{Elmendorf}. The following is a reformulation due to Piacenza \cite[Theorem 6.3]{Piacenza}:

\begin{thm*}[Elmendorf's theorem, classical version]
    Let $G$ be a topological group, and let $\mathcal{O}_G$ be the category of $G$-orbits (that is, $G$-spaces of the form $G/H$). Then, there is a Quillen equivalence between $\Top^G$ and $\Top^{\mathcal{O}_G^{op}}$.
\end{thm*}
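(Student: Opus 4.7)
The plan is to construct an explicit adjunction $\Psi \dashv \Phi$ between $\Top^{\mathcal{O}_G^{op}}$ and $\Top^G$ whose right adjoint records fixed points, transfer model structures to both sides, and then promote the Quillen adjunction to a Quillen equivalence by a cell-induction argument.

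First I would define $\Phi\colon \Top^G \to \Top^{\mathcal{O}_G^{op}}$ on objects by $\Phi(X)(G/H) = X^H \cong \Hom^G(G/H, X)$, with functoriality in $G/H$ from precomposition. The existence of a left adjoint is then forced, and it can be written as the coend
\[
\Psi(T) = \int^{G/H \in \mathcal{O}_G} T(G/H) \times G/H,
\]
so natural transformations $T \to \Phi(X)$ correspond to $G$-equivariant maps $\Psi(T) \to X$. Next I would equip $\Top^G$ with the model structure whose weak equivalences and fibrations are those maps $f$ for which $f^H$ is a weak equivalence / Serre fibration for every closed subgroup $H \leq G$, with generating cofibrations of the form $G/H \times S^{n-1} \hookrightarrow G/H \times D^n$. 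On $\Top^{\mathcal{O}_G^{op}}$ I would use the projective model structure, with weak equivalences and fibrations defined levelwise.

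To verify that $(\Psi, \Phi)$ is a Quillen adjunction, I would check that $\Phi$ preserves fibrations and trivial fibrations. This reduces to a levelwise statement: $\Phi(f)(G/H) = f^H$, and $f$ is a (trivial) fibration in $\Top^G$ precisely when every $f^H$ is a (trivial) Serre fibration. Equivalently, one checks on generators that $\Psi(\mathcal{O}_G(-, G/H) \times S^{n-1}) \hookrightarrow \Psi(\mathcal{O}_G(-, G/H) \times D^n)$ is a generating cofibration in $\Top^G$, which follows from the coend calculation $\Psi(\mathcal{O}_G(-, G/H)) \cong G/H$.

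Finally, to upgrade to a Quillen equivalence, I would verify: (i) $\Phi$ reflects weak equivalences between fibrant objects, which is automatic because every object of $\Top^G$ is fibrant in the relevant structure and weak equivalences on $\Top^G$ are detected by fixed points, i.e.\ by $\Phi$; and (ii) the derived unit $T \to \Phi \Psi T$ is a weak equivalence whenever $T$ is cofibrant. For (ii) I would proceed by cell induction. On a representable $T = \mathcal{O}_G(-, G/K)$, the unit becomes the Yoneda identification $\mathcal{O}_G(G/H, G/K) \cong (G/K)^H$, which is an isomorphism. Pushouts of generating cofibrations along arbitrary maps are sent by $\Psi$ to pushouts in $\Top^G$, and these are again homotopy pushouts because the left legs are cofibrations; since $\Phi$ preserves filtered colimits and such pushouts at each fixed-point level, the unit is preserved under the transfinite cellular construction of any cofibrant $T$. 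I expect the verification that the fixed-point functors $(-)^H$ commute with the relevant cellular colimits and pushouts to be the main obstacle, as this is where one pays the price for working with genuine (as opposed to naive) fixed points, and it is what pins down the choice of model structure on $\Top^G$.
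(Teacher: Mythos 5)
Your proposal is correct and follows essentially the same route as the paper's proof of its diagram-equivariant generalization: the same fixed-point/orbit adjunction, the same pair of model structures (projective on $\Top^{\mathcal{O}_G^{op}}$, $(-)^H$-detected on $\Top^G$), the same observation that $\Phi$ preserves (trivial) fibrations essentially by construction, and the same transfinite cell-induction to show the unit is an equivalence on cofibrant objects. The only cosmetic difference is that you present the left adjoint as a coend whereas the paper's $K$ is restriction along the Yoneda embedding (the two agree by co-Yoneda), and you correctly single out the commutation of $(-)^H$ with cellular pushouts and transfinite colimits as the step requiring care, which is exactly what the paper's Proposition \ref{OfixedPushout} and the limit-ordinal case of Theorem \ref{elminduction} handle.
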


Our main result is a generalization of Elmendorf's theorem to ``$D$-spaces", functors from a small category $D$ to $\Top$. $D$-spaces generalize equivariant homotopy theory because every group can be viewed as a small category. For this reason, we'll refer to the action of $D$ as ``$D$-equivariance'' or ``diagram equivariance.'' In the 1980s, $D$-spaces were studied to prove categorical facts in homotopy theory. For instance, the version of Elmendorf's theorem above uses the category $D = \mathcal{O}_G^{op}$. In a similar vein, functors out of the poset $\mathbb{N}$ are often used to define spectra. $D$-spaces are also interesting objects in their own right, and we will give an overview of some common examples in Section \ref{section:basicnotions}. In the meantime, let us state our main theorem:

\begin{thm*}[Elmendorf's theorem, diagram-equivariant version]
    Let $D$ be a small\footnote{That is, a category with an actual set of objects where $\Hom(X,Y)$ is also always a set.} category, and let $\mathcal{O}_D$ be the category of $D$-orbits (that is, $D$-spaces $X$ where $\Colim(X)$ is terminal). Then, there is a Quillen equivalence between $\Top^D$ and $\Top^{\mathcal{O}_D^{op}}$.
\end{thm*}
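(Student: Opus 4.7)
The plan is to mimic Piacenza's proof of the classical Elmendorf theorem, with $D$-orbits playing the role of coset spaces $G/H$. First I would introduce the candidate adjunction $\Psi \dashv \Phi$. The right adjoint $\Phi \colon \Top^D \to \Top^{\mathcal{O}_D^{op}}$ sends a $D$-space $X$ to the ``generalized fixed-point'' presheaf $T \mapsto \Hom_{\Top^D}(T,X)$, and the left adjoint is given by the coend
\[
\Psi(Y)(d) \;=\; \int^{T \in \mathcal{O}_D} Y(T) \times T(d).
\]
Adjointness is then a formal coend manipulation, and the co-Yoneda lemma gives $\Psi(\mathcal{O}_D(-,T)) \cong T$, so $\Psi$ carries each representable presheaf back to its corresponding orbit.

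Second, I would put cofibrantly generated model structures on both sides. On $\Top^{\mathcal{O}_D^{op}}$ take the standard projective structure, whose generating (acyclic) cofibrations are $\mathcal{O}_D(-,T) \times i$ as $T$ ranges over $\mathcal{O}_D$ and $i$ ranges over a set of generating (acyclic) cofibrations of $\Top$. On $\Top^D$ I would transfer this structure along $\Phi$: declare $f$ to be a weak equivalence or fibration precisely when $\Phi(f)$ is. By the co-Yoneda identification the generating (acyclic) cofibrations of the transferred structure are $T \times i$ for $T \in \mathcal{O}_D$, and the adjunction $\Psi \dashv \Phi$ becomes Quillen by construction.

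Third, I would verify the Quillen equivalence. Since every topological space is fibrant, every object on either side is fibrant, so it suffices to check that the unit $\eta_Y \colon Y \to \Phi\Psi(Y)$ is a weak equivalence for every cofibrant $Y \in \Top^{\mathcal{O}_D^{op}}$. On a representable $Y = \mathcal{O}_D(-,T)$ the unit is literally the Yoneda isomorphism. The case of a general cell complex $Y$ then follows by transfinite induction along its cell attachments, using that $\Psi$ and $\Phi$ interact well with the pushouts and filtered colimits that build $Y$, together with the fact that objectwise weak equivalences of presheaves are stable under these cellular operations.

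The main obstacle I anticipate is verifying that the transferred model structure on $\Top^D$ actually exists, i.e.\ successfully invoking Kan's recognition theorem. Concretely, one must show that every relative cell complex built from maps of the form $T \times j$, with $j$ a generating acyclic cofibration of $\Top$, is carried by $\Phi$ to an objectwise weak equivalence in $\Top^{\mathcal{O}_D^{op}}$. This is the assertion that $\Hom_{\Top^D}(S,-)$ interacts well with cellular pushouts of orbits for every $S \in \mathcal{O}_D$. In the classical case $D = BG$ this is underwritten by the discreteness of the hom sets $\Hom_{\Top^G}(G/K,G/H)$, but for general $D$ the argument must lean on the orbit theory developed earlier in the paper to control how $\Hom_{\Top^D}(S,-)$ distributes over the building blocks of such a cell complex.
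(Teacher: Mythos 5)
Your proposal takes essentially the same route as the paper: the same right adjoint $\Phi(X)(O) = \Hom_{\Top^D}(O,X)$, the projective model structure on $\Top^{\mathcal{O}_D^{op}}$, the orbit-indexed (transferred) model structure on $\Top^D$, a Quillen adjunction argument via $\Phi$ preserving fibrations and trivial fibrations, and a verification of the Quillen equivalence by showing the unit is a weak equivalence on cofibrants via transfinite induction over cell attachments. The only cosmetic difference is that the paper presents the left adjoint directly as precomposition with the Yoneda embedding ($K(R)_d = R(F^d)$) rather than as the coend $\int^{T} Y(T)\times T(d)$; these agree by co-Yoneda, exactly as you observe.
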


The key definition that facilitates the study of $D$-spaces is the aforementioned notion of $D$-orbit, which is originally due to Dror Farjoun and Zabrodsky in \cite{DF}. In our version of Elmendorf's theorem, the model structures on $\Top^D$ and $\Top^{\mathcal{O}_D^{op}}$ are defined precisely so that they track the orbits of $D$. Namely:

\begin{definition*}
    In the \textit{projective model structure} on $\Top^{\mathcal{O}_D^{op}}$, weak equivalences (resp. fibrations) are those maps $\beta \colon R \rightarrow S$ such that, for each object $O \in \mathcal{O}_D$, $\beta_O$ is a weak equivalence (resp. fibration).
\end{definition*}

\begin{definition*}
    In the \textit{$\mathcal{O}_D$ model structure} on $\Top^D$ weak equivalences (resp. fibrations) are those maps $\alpha \colon X \rightarrow Y$ such that, for each object $O \in \mathcal{O}_D$, $\Top^D(O, \alpha)$ is a weak equivalence (resp. fibration).
\end{definition*}

Note that while $D$ and $\mathcal{O}_D^{op}$ are both categories, the model structures on $\Top^D$ and $\Top^{\mathcal{O}_D^{op}}$ are quite different. When Piacenza was proving his version of Elmendorf's theorem, he only needed the projective model structure because he was only considering categories of the form $\mathcal{O}_G^{op}$. In this paper, we're doing something quite different, which is to allow a small category $D$ to replace the group $G$. Thus, for our theorem to work, we create a model structure that directly generalizes the one Piacenza uses on $\Top^G$. In fact, because the model structures on $\Top^D$ and $\Top^{\mathcal{O}_D^{op}}$ are both based on orbits, we actually prove a more general result:

\begin{thm*}[Main Theorem]
        Let $D$ be a small category, let $\mathcal{F}$ be a collection of orbits containing all free $D$-orbits (those of the form $D(d,-)$ for some object $d \in D$), and let $\mathcal{O}_{\mathcal{F}}$ be the category of $D$-orbits in $\mathcal{F}$. Then, there is a Quillen equivalence between $\Top^D$ and $\Top^{\mathcal{O}_{\mathcal{F}}^{op}}$.
\end{thm*}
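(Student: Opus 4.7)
The plan is to exhibit an explicit adjunction $\Phi \colon \Top^{\mathcal{O}_{\mathcal{F}}^{op}} \rightleftarrows \Top^D \colon \Psi$ and then show it is a Quillen equivalence. I define the right adjoint by $\Psi(X)(O) = \Top^D(O, X)$ (the analogue of the classical $H$-fixed-point functor $X \mapsto X^H$), and the left adjoint as the cocontinuous extension of the inclusion $\mathcal{O}_\mathcal{F} \hookrightarrow \Top^D$, computed by the coend $\Phi(R) = \int^{O \in \mathcal{O}_\mathcal{F}} R(O) \times O$. That this is a Quillen adjunction is essentially immediate from the setup: by the very definition of the $\mathcal{O}_\mathcal{F}$ model structure on $\Top^D$, a map $\alpha$ is a weak equivalence or fibration exactly when $\Psi(\alpha)_O = \Top^D(O, \alpha)$ is such for every $O \in \mathcal{O}_\mathcal{F}$, and this is precisely the condition defining weak equivalences and fibrations in the projective model structure on $\Top^{\mathcal{O}_\mathcal{F}^{op}}$. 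Hence $\Psi$ preserves and detects both classes automatically.

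Since $\Psi$ preserves all weak equivalences and every object in both model categories is fibrant (the Quillen model structure on $\Top$ has all objects fibrant), the adjunction is a Quillen equivalence if and only if the unit $\eta_R \colon R \to \Psi \Phi R$ is a weak equivalence for every projective-cofibrant $R$. The base case is $R = \mathcal{O}_\mathcal{F}(-, O)$ a representable: the co-Yoneda lemma gives $\Phi(\mathcal{O}_\mathcal{F}(-, O)) \cong O$, and since $\mathcal{O}_\mathcal{F}$ sits as a full subcategory of $\Top^D$ we have $\Psi(O)(O') = \Top^D(O', O) = \mathcal{O}_\mathcal{F}(O', O)$, so $\eta$ is an isomorphism on representables. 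General cofibrant objects are retracts of cellular objects built from pushouts along the generating cofibrations $\mathcal{O}_\mathcal{F}(-, O) \times S^{n-1} \hookrightarrow \mathcal{O}_\mathcal{F}(-, O) \times D^n$ and transfinite compositions thereof, so the plan is to propagate the result from representables to all cofibrant $R$ by a standard cellular induction.

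The main obstacle is the inductive step, which requires that $\Psi$ commute up to weak equivalence with the pushouts and sequential colimits used in the cellular construction. As a right adjoint, $\Psi$ does not preserve colimits in general; what must be shown is that each orbit $O$ is suitably small in $\Top^D$ (so that $\Top^D(O, -)$ commutes with the relevant transfinite colimits of closed inclusions) and that the pushouts of generating cofibrations have the Mayer--Vietoris-type property needed for the pointwise homotopical check. The hypothesis that $\mathcal{F}$ contains all free orbits $D(d, -)$ is critical here: via the Yoneda isomorphism $\Top^D(D(d, -), X) \cong X(d)$, the free orbits detect the underlying pointwise data of $D$-spaces, which reduces smallness and homotopy-pushout checks to the classical statements in $\Top$. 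Once this is established, a transfinite induction on cell attachments together with the retract argument extends the fact that $\eta_R$ is a weak equivalence from representables to arbitrary cofibrant $R$, completing the proof.
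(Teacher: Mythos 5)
Your proposal follows essentially the same route as the paper: the same adjunction (your coend description of the left adjoint $\Phi(R)=\int^{O}R(O)\times O$ is by co-Yoneda exactly the paper's $K(R)=R\circ i$, restriction along the Yoneda embedding $d\mapsto F^d$), the same immediate verification that the right adjoint preserves and detects fibrations and weak equivalences, and the same reduction to showing the unit is an equivalence on cofibrant $R$ via transfinite cellular induction. One point worth sharpening in your ``main obstacle'' paragraph: the paper does not prove a homotopy-invariance (Mayer--Vietoris) statement for $\Top^D(O,-)$; it proves the strictly stronger categorical fact (Proposition~\ref{OfixedPushout}) that $\Top^D(O,-)$ preserves pushouts and coproducts on the nose, precisely because $\Colim(O)$ is a point so any map $O\to Y\sqcup_W Z$ factors through one leg, and similarly each orbit maps into a single cell of a transfinite cellular filtration. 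This lets the paper conclude that the unit $\eta_R$ is an \emph{isomorphism} (not merely a weak equivalence) for every cofibrant $R$, which sidesteps all pointwise homotopical bookkeeping and is the cleaner thing to aim for.
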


For our main theorem, we use model structures where the weak equivalences and fibrations only involve orbits in $\mathcal{F}$. In the case where $D$ is a group, this generalization was proven by Stephan \cite{Stephan}; our proof follows much of the same argument. This generalization is especially useful in the diagram-equivariant setting because $\mathcal{O}_D$ is often a large category, but only a small set of orbit types appear in any given $D$-space.

We prove our diagram-equivariant Elmendorf's theorem with an eye toward equivariant \textit{stable} homotopy theory. In the past decade, there have been approaches due to Barwick \cite{Barwick} and Guillou-May \cite{Guillou-May} defining equivariant spectra via ``spectral Mackey functors.'' These are functors whose domain is a modified version of $\mathcal{O}_G^{op}$. This author's PhD thesis \cite{HousdenDissertation} uses a version of spectral Mackey functors to define diagram-equivariant spectra. By contrast, older approaches (going as far back as Graeme Segal's paper \cite{SegalESHT} establishing equivariant stable homotopy theory) rely heavily on representation theory and the representation spheres $S^V$. The key limitation of the representation-based approach is that one can't easily define representation spheres when $G$ isn't a compact Lie group. Working with orbits directly allows one to sidestep this limitation. After all, the classical version of Elmendorf's theorem holds for general topological groups. For this reason, attempts to define equivariant spectra for groups that aren't compact Lie, such as $\mathbb{Z}$, have taken a more orbit-centric approach. While this paper focuses entirely on $D$-spaces, our envisioned applications are these orbit-centric versions of equivariant spectra.

\subsection*{Outline} The paper is organized as follows: Section \ref{section:basicnotions} lays out the first properties, related definitions, and examples of $D$-spaces. Section \ref{section:DProperties} explores the categorical properties of $\Top^D$. Section \ref{section:orbits} explains how $D$-orbits perform the same role in $D$-equivariant homotopy theory that subgroups $H \leq G$ do in $G$-equivariant homotopy theory. (These notions are indeed compatible when $D$ is a group.) Section \ref{section:invariants} discusses the homotopy groups of $D$-spaces and how they are naturally indexed by the category of $D$-orbits. Section \ref{section:complexes} explores the notions of $D$-CW-complexes and $D$-cell complexes, which are also due to Dror Farjoun and Zabrodsky. Section \ref{section:models} contains the model structures on $\Top^D$ and $\Top^{\mathcal{O}_{\mathcal{F}}^{op}}$, and Section \ref{section:Elmendorf} uses them to prove our generalized Elmendorf's theorem.

\subsection*{Acknowledgements} 

This contents of this paper formed the first half of my PhD thesis, and I'm grateful to my advisor, Mike Hill, for the many hours of discussion that led to these results. I'd also like to thank Anna Marie Bohmann for her many helpful comments on previous drafts of this paper.

\section{Basic Notions}\label{section:basicnotions}
Equivariance is often encoded as a continuous group action on a topological space. To view this more categorically, we recall that a group can be defined as a (small) category with one object where every morphism is an isomorphism. In this context, a space with a $G$-action is just a functor $X \colon  G \rightarrow \Top$, where $G$ is regarded as a category.

This definition would work just as well if $G$ were any category, which leads us to:

    \definition Given a small category $D$, a \textit{$D$-space} is a functor $X \colon  D \rightarrow \Top$. A morphism of $D$-spaces $\alpha \colon  X \rightarrow Y$ is a natural transformation.

This categorification works just as well for other kinds of ``objects with group action.'' For instance, a \textit{$D$-set} is a functor $X \colon  D \rightarrow \Set$ and a \textit{D-representation} is a functor $X \colon  D \rightarrow \mathrm{Vect}_k$ where $\mathrm{Vect}_k$ is the category of vector spaces over a fixed field, $k$. As with $D$-spaces, morphisms are natural transformations. We now repeat our key example, orbits, which generalize $G$-sets of the form $G/H$:

    \definition[{\cite[Definition 1.1]{DF}}] Given a category $D$, we say that a $D$-space $X\colon  D \rightarrow \Top$ is an \textit{orbit} if the colimit of $X$ is terminal (that is, a one-point space).
 
We are especially interested in those orbits that we can analyze via the Yoneda lemma:
   
    \definition[{\cite[Definition 2.2]{DFAlone}}] Given a category $D$ and object $d$, the \textit{free orbit of $d$}, $F^d$, is the representable $D$-set (and discrete $D$-space) $D(d,-)$.
    
    \prop For any object $d \in D$, the free orbit $D(d,-)$ is indeed an orbit.
        \begin{proof}
            For any morphism $f\colon d \rightarrow d'$ with source $d$, $\id_d \circ f = f$. Thus, $D(d,f)\colon D(d,d) \rightarrow D(d,d')$ sends $\id_d$ to $f$, so $\id_d$ and $f$ are glued together in $\Colim(D(d,-))$. Because this holds for all $f$ with source $d$ (that is, all elements of $\amalg_{d'}D(d,d')$), the colimit of $D(d,-)$ is a one-point set, meaning $D(d,-)$ is an orbit.
        \end{proof}

When $D$ is a group, the sole free orbit is isomorphic to $D/\{e\}$ due to $D$ only having one object. As for the other orbits, one can indeed check that $D$ being a group implies that any $D$-orbit $O$ is isomorphic to $D/H$ for some subgroup $H$. However, things generally get much more exciting for non-group categories, even very simple ones. For example:

    \definition\label{Jdef} Let $\mathbb{J} = s \xrightarrow{f} t$ be the category with two objects (``s'' for ``source'' and ``t'' for ``target'') and one non-identity morphism $f\colon  x \rightarrow y$.
    
    \prop\label{LargeJOrbits} There is an equivalence of categories between the category of $\mathbb{J}$-orbits and $\Top$.
    
        \begin{proof}
        By the usual construction of colimits in $\Top$, the colimit of $X\colon \mathbb{J} \rightarrow \Top$ is given as the quotient $(X_s \amalg X_t)/ \sim$, where $\sim$ relates each $x_s \in X_s$ to $f(x_s) \in X_t$. Note that each $x_s \in X_s$ is related to exactly one point in $X_t$ because $f$ is a function. Thus, it's not possible for two distinct points in $X_t$ to become glued together in the quotient, so any orbit $X$ must have a singleton $X_t$. This then forces all points in $X_s$ to be glued to the unique $x_t \in X_t$. Hence, a $\mathbb{J}$-orbit is precisely a $\mathbb{J}$-space $X\colon \mathbb{J} \rightarrow \Top$ such that $X_t$ has a single point. From this we observe that any continuous map $\alpha_s\colon  X_s \rightarrow Y_s$ will yield a commutative square

        \[\begin{tikzcd}
        	{X_s} && {Y_s} \\
        	\\
        	{X_t} && {Y_t}
        	\arrow["{\alpha_S}", from=1-1, to=1-3]
        	\arrow["{\alpha_t}", from=3-1, to=3-3]
        	\arrow["{X_f}"', from=1-1, to=3-1]
        	\arrow["{Y_f}"', from=1-3, to=3-3]
        \end{tikzcd}\]
        
        \noindent whenever $Y$ is an orbit. $\alpha$ is then uniquely determined. In other words, the functor $\Top^{\mathbb{J}} \rightarrow \Top$ taking $X$ to $X_s$ is full, faithful, and essentially surjective (i.e., an equivalence).
        \end{proof}
        
We'll use finite $\mathbb{J}$-orbits as a frequent source of examples, so it will be helpful to have simple notation for them: 

    \notn For any $n \in \mathbb{N}$, $[n]$ is the $\mathbb{J}$-orbit $\{ 0, \dots n-1 \} \rightarrow \{ \pt \}$.
    
    \obs The free orbits of $\mathbb{J}$ are $[0] \cong F^t = \mathbb{J}(t,-)$ and $[1] \cong F^s = \mathbb{J}(s,-)$.

For a general small category $D$, we can still have a lot of orbits to work with. In most situations, we only need to consider the orbits that actually appear in our $D$-space $X$. Let us introduce some vocabulary to describe these orbits.

    \conv \label{constantconvention} For any topological space $A$, we will abuse notation and also refer to its corresponding constant $D$-space as $A$. (Thus, the statement $A_d = A$ is perfectly valid.)

    \definition[{\cite[Definition 2.2]{DF}}] \label{orbitofx}  Given a $D$-space $X$ and a point $x_d$ of $X_d$ for some object $d \in D$, the \textit{orbit of $x_d$}, $O_{x_d}$, is the $D$-space of points in $X$ that get glued to $x_d$ in $\Colim(X)$. This can be identified with the following pullback: (The spaces on the bottom row are viewed as constant $D$-spaces, following Convention \ref{constantconvention}.)

    \[\begin{tikzcd}
    	{O_{x_d}} && X \\
    	\\
    	{\{x_d\}} && {\Colim(X)}
    	\arrow[from=1-3, to=3-3]
    	\arrow[hook, from=3-1, to=3-3]
    	\arrow[from=1-1, to=3-1]
    	\arrow[hook, from=1-1, to=1-3]
    	\arrow["\lrcorner"{anchor=center, pos=0.125}, draw=none, from=1-1, to=3-3]
    \end{tikzcd}\]

Orbit types allow us to classify discrete $D$-spaces (that is, $D$-sets) as follows:

    \prop Given a small category $D$, any $D$-set $X$ can be decomposed as the coproduct of its (necessarily discrete) $D$-orbits. Furthermore, this decomposition is unique up to reordering and isomorphic replacement of the factors.
    
        \begin{proof}
            Given a point $x_d$ of $X$, each point $x \in O_{x_d}$ is by definition glued to $x_d$ in $\Colim(X)$. Thus, each point of $X$ is in precisely one orbit. This gives a disjoint union decomposition of $X$, which is precisely the claimed coproduct structure.
            
            To see uniqueness, suppose $X$ is isomorphic to both $\coprod_{i \in I} O_i$ and $\coprod_{j \in J} \widetilde{O_j}$, where each $O_i$ and $\widetilde{O_j}$ is a $D$-orbit. The fact that these are both decompositions of $X$ give us an isomorphism \[f\colon  \coprod_{i \in I} O_i \rightarrow \coprod_{j \in J} \widetilde{O_j}.\] Because isomorphisms preserve colimits, $f$ induces an isomorphism of sets \[g\colon \Colim(\coprod_{i \in I} O_i) \rightarrow \Colim(\coprod_{j \in J} \widetilde{O_j}).\] Since colimits commute with coproducts, $g$ can instead be viewed as a bijective function \[g\colon \coprod_{i \in I} \Colim(O_I) \rightarrow \coprod_{j \in J} \Colim(\widetilde{O_j}).\] But each $\Colim(O_i)$ and $\Colim(\widetilde{O_j})$ is a one-point set, so $g$ corresponds to a bijection from $I$ to $J$, which we will call $h$. The fact that $h$ sends $i$ to $h(i)$ means that $f$ sends points in $O_i$ to points in $\widetilde{O}_{h(i)}$. Because $h$ is a bijection, only the points in $O_i$ can be sent to $\widetilde{O}_{h(i)}$. Thus, since $f$ is an isomorphism, the restriction of $f$ to $O_i \rightarrow \widetilde{O}_{h(i)}$ must also be a bijection and hence an isomorphism. This shows the desired uniqueness.
        \end{proof}

\section{Properties of $\mathrm{Top}^D$} \label{section:DProperties}
Let us now explore the categorical properties of $\Top^D$:

    \prop For any small category $D$, $\Top^D$ has all small limits and colimits.
        \begin{proof}
            This is immediate from the fact that $\Top$ has all small limits and colimits and the fact that limits and colimits in a functor category can be computed objectwise. 
        \end{proof}

Next, we'd like see that $\Top^D$ is nicely enriched in $\Top$. What follows is largely a recap of \cite[Section 3]{DF}.

    \definition For any $D$-spaces $X$ and $Y$, we topologize $D(X,Y)$ with the subspace topology from its inclusion into $\Top(\amalg_d X_d, \amalg_d Y_d)$.
    
    \coro We can view $\Top^D$ as enriched in $\Top$.
    
However, we can also view $D(X,Y)$ as a $D$-space:

    \definition \cite[Proposition 2.17]{DFAlone} For any $D$-spaces $X$ and $Y$, we can view $D(X,Y)$ as a $D$-space where $D(X,Y)_d = D(X \times F^d,Y)$. For any morphism $f:d \rightarrow d'$ in $D$, \[D(X,Y)_f: D(X \times F^d, Y) \rightarrow D(X \times F^{d'}, Y)\] is induced by the natural map \[D(f,-):F^{d'} = D(d',-) \rightarrow D(d,-) = F^d.\] This construction makes $\Top^D$ enriched in itself. Many times, we'll use the following notation for $D(X,Y)$:

    \notn For $D$-spaces $X$ and $Y$, $Y^X \coloneqq \Top^D(X,Y)$. Whether we wish for $Y^X$ to be a space or a $D$-space will depend on context.

Most commonly, we'll be applying this notation to the context of the representable ``fixed point'' functor $(-)^Y$. Let's list a few of this functor's properties: 

    \prop \label{Ofixedlimits} For any $D$-space $X$, the functor $(-)^X$ (valued in either $\Top$ or $\Top^D$) preserves limits.
        \begin{proof}
            This is immediate from the fact that (enriched) representable functors preserve limits. 
        \end{proof}
        
    \prop \label{OfixedPushout} For any $D$-orbit $O$, the functor $(-)^O$ (valued in either $\Top$ or $\Top^D$) preserves pushouts and coproducts.
        \begin{proof}
            We begin with pushouts: Consider any pushout $Y \sqcup_W Z$ of $D$-spaces. Because ${\textrm{colim}(O) = \{o\}}$ is a one-point space, a map $f:O \rightarrow Y \sqcup_W Z$ is factored by a map $O \rightarrow Y$ or $O \rightarrow Z$ based on whether a given point $f(o)$ lands in $\Colim(Y) \subseteq \Colim(Y \sqcup_W Z)$ or in $\Colim(Z) \subseteq \Colim(Y \sqcup_W Z)$. If $f(o)$ lands in both $\Colim(Y)$ and $\Colim(Z)$, then $O \rightarrow Y$ and $O \rightarrow Z$ are both factored by a map $O \rightarrow X$. In other words, $(Y \sqcup_W Z)^O$ has the universal property of a pushout of $Y^O \leftarrow W^O \rightarrow Z^O$, so $(Y \sqcup_W Z)^O$ and $Y^O \sqcup_{W^O} X^O$ are naturally isomorphic as spaces. In other words, $(-)^O$ preserves pushouts, at least when it's valued in $\Top$.
            
            Similarly, for any coproduct $\amalg_{i \in I} Y_i$ of $D$-spaces, a map \[O \rightarrow \amalg_{i \in I} Y_i\] is factored based on which $\textrm{colim}_{d \in D}(Y_i) \subseteq \textrm{colim}_{d \in D}(\amalg_{i \in I} Y_i)$ that $f(o)$ lands in. Thus, $(\amalg_{i \in I} Y_i)^O$ has the universal property of $\amalg_{i \in I} Y^O_i$, at least when $(-)^O$ is valued in $\Top$.
            
            To get the version valued in $\Top^D$, recall that $X^O_d$ is the space $\Top^D(F^d,X^O)$. Since $F^d$ is an orbit, we conclude the pushouts and coproducts are preserved at each object. Since equivariant maps that have objectwise-isomorphisms are themselves isomorphisms, we conclude that the $\Top^D$-valued functor $( - )^O$ preserves pushouts and coproducts.
        \end{proof}

We can say more about this enrichment if we take $\Top$ to be the category of compactly generated weak Hausdorff spaces. This condition implies that, for any spaces $A, B, C$, we have an isomorphism of spaces \[\Top(A \times B, C) \cong \Top(A,\Top(B,C)).\]

As long as we have this condition, we get the following:

    \prop $\Top^D$ is a closed symmetric monoidal category, with monoidal structure given by objectwise Cartesian product.
        \begin{proof}
            We just need to confirm that there is a natural isomorphism of sets \[\Top^D(X \times Y,Z) \cong \Top^D(X,\Top^D(Y,Z)).\] 
            Given a natural transformation \[\alpha:X \times Y \rightarrow Z,\] we get a natural transformation \[\beta: X \rightarrow \Top^D(Y,Z),\] where \[\beta_d:X_d \rightarrow \Top^D(Y,Z)_d = \Top^D(Y \times F^d,Z)\] is defined by \[ [\beta_d(x_d)](y_{d'},f) = \alpha(f(x_d),y_{d'}).\] (Here, $f$ is a generic element of $F^d(d')$, meaning it's a morphism $f:d \rightarrow d'$.) The fact that we're working with compactly generated weak Hausdorff spaces ensures that each $\beta_d$ is continuous. We can recover $\alpha$ from $\beta$ by setting \[ \alpha_d(x_d,y_d) = [\beta_d(x_d)](y_d,\id_d).\] Again, the fact that we're working with compactly generated weak Hausdorff spaces ensures that each $\alpha_d$ is continuous.
        \end{proof}
        
If we apply the same argument to \textit{pointed} compactly generated weak Hausdorff spaces, using the isomorphism \[\Top_{\bullet}(D \wedge E,F) \cong \Top_{\bullet}(D,\Top(E,F)),\] for any pointed spaces $D,E,$ and $F$, we get:

    \coro \label{pointedhomtensor} $\Top^D_{\bullet}$ is closed symmetric monoidal category, with monoidal structure given by objectwise smash product.

\section{Orbits vs. Subgroups} \label{section:orbits}
In the group case, keeping track of orbits of $G$ is essentially the same task as keeping track of subgroups of $G$. One way of making this precise is the following:

    \prop The category of $G$-orbits with $G$-equivariant maps, $\mathcal{O}_G$, is equivalent to the category of subgroups of $G$ with inclusions and conjugations for morphisms. 
    
For general small categories $D$, the natural generalization of this proposition that uses ``subcategory'' instead of ``subgroup'' is not even remotely true. For $\mathbb{J}$ in Definition \ref{Jdef}, we saw in Proposition \ref{LargeJOrbits} that the orbit category was equivalent to $\Top$, but we can see there are only finitely many subcategories! Thankfully, we don't need to use subgroups to capture the equivariant structure, and our story can be explained purely in terms of orbits. Let's explore how to go about this; for the group case, this will involve translating notions that use the subgroup $H \leq G$ into the language of $G$-orbits $G/H$. But first, we'll need a definition:

    \definition Given a $D$-set $T\colon D \rightarrow \Set$, its \textit{translation category}, $B_D(T)$, is the category with objects given by elements of $\amalg_{d \in D} T_d$ and has morphism-sets defined by \[B_D(T)(a,b) = \{f \in D \mid T_f(a)=b \}.\]
    
    \prop This construction is functorial in $T$. 
    
    \example When we pick $D$ to be a group and $T$ to be some orbit $D/H$, we get what is usually called the \textit{translation groupoid}. This translation groupoid, $B_D(D/H)$, is in fact equivalent (in the categorical sense) to $H$. Thus, the functor categories $\Top^H$ and $\Top^{B_D(D/H)}$ are categorically equivalent. Hence, we can talk about ``restricted'' action of subgroups purely in terms of orbits: while any $G$-space $X$ has a ``restricted'' $H$ action given by the inclusion $H \leq G$, $X$ also gives rise to a $B_G(G/H)$-space that encodes the same data. We can use a similar technique to discuss the fixed-point spaces of $X$: the set $X^H$ of points in $X$ that are fixed by the action of $H$ can be identified with $\Top^G(G/H, X)$.

\section{The Homotopy Theory of $D$-Spaces}\label{section:invariants}
To use algebraic invariants for $D$-spaces, we need to choose how much of the $D$-equivariant structure to capture. In classical (non-equivariant) homotopy theory, the $n$th homotopy groups of a pointed space $X$, $\pi_n(X)$, is usually viewed as consisting of the homotopy classes of pointed maps from $S^n$ to $X$. If we want to do this equivariantly, $X$ will have an action attached to it, and we need to decide what action $S^n$ has. If we give $S^n$ the constant ``trivial'' action, (that is, viewing $S^n$ as a functor $D \rightarrow \Top_{\bullet}$ that sends every object to the sphere $S^n$ and every morphism to the identity map on $S^n$) then we're extremely limited in the power of our invariants. For instance:

    \example Let $C_2$ denote the group of order $2$, and let $V$ be the $m$-dimensional orthogonal $C_2$-representation where the non-identity morphism acts via multiplication by $-1$. For any $n$, there is only one pointed $C_2$-equivariant map from $S^n$ to $S^V$. 
    
In other words, pointed $C_2$-equivariant maps from various $S^n$ can't distinguish between the $S^V$ above and a point. If we built a homotopy invariant out of the homotopy classes of such maps, we'd have a very weak invariant. A common solution to this problem is to instead build an invariant from the data of $\pi_n(X^H)$ for all subgroups $H \leq G$. This is the approach we'll adapt, using the fact that $X^H \cong \Top^G(G/H,X)$ to make an orbit-theoretic statement. But first, let's note that the weakness of only considering spheres with trivial actions isn't unique to the group-equivariant case:

    \example Let $X$ be the $\mathbb{J}$-space where $X_s = \{ \pt \}$ and $X_t = S^m$. For any $n$, there is only one $\mathbb{J}$-equivariant map from $S^n$ to $X$. (Here, we're following Convention \ref{constantconvention} and treating $S^n$ as a constant $\mathbb{J}$-space, which is the same as saying that $S^n$ has the ``trivial'' action.)
    
In both examples, the issue is orbit types: an equivariant map can only send points of orbit type $O_1$ to points of orbit type $O_2$ if there's a $D$-equivariant map from $O_1$ to $O_2$. When we used $S^n$ with the trivial action, there was only one orbit type represented,\footnote{This is true for all connected categories. In general, a constant $D$-space has orbit types precisely corresponding to the connected components of $D$.} that orbit being $C_2/C_2$ in the first example and $[1]$ in the second example. However, there were other orbit types present in the codomain, namely $C_2/\{e\}$ and $[0]$, respectively.

We can capture the homotopical data for these other orbit types by replacing $S^n$ with the ``free'' space $S^n \wedge O_+$. ($O_+$ is the pointed $D$-space obtained from $O$ by adding a disjoint base point at every object.) Aside from potentially the base points, every point in $S^n \wedge O_+$ has orbit type $O$. By Corollary \ref{pointedhomtensor}, an equivariant map from $S^n \wedge O_+$ to $X$ is equivalent to the data of an equivariant map from $S^n$ to $X^O$.

Thus, instead of having a single $n$-th homotopy group, we have one for each orbit. By the representability of homotopy groups, these can be arranged into a functor:

    \definition Let $\mathcal{O}_D$ be the category of $D$-orbits with $D$-equivariant maps. Given a pointed $D$-space $X$, its \textit{$n$-th equivariant homotopy group functor} is a contravariant functor $\pi_n^*(X): \mathcal{O}_D^{op} \rightarrow Grp$ given by \[\pi_n^O(X) = [S^n \wedge O_+, X]^D \cong [S^n, X^O]^D.\] 
    
Here, $[ - , - ]^D$ denotes the set of $D$-equivariant homotopy classes of maps. $\pi_n^O(X)$ is indeed a group when $n \geq 1$ because the constant space $S^n$ is automatically a cogroup object in the category of $D$-spaces with $D$-homotopy classes of maps. Let's now explore this invariant with a few examples for $\mathbb{J}$: 
    
    \example Let $X$ be a constant $\mathbb{J}$-space. Then, $\pi_n^*(X)$ is the constant functor $\pi_n(X_t)$ (or equivalently, $\pi_n(X_s)$).
    
        \begin{proof}
        Consider any morphism $\alpha: (S^n \wedge O_+) \rightarrow X$:

        \[\begin{tikzcd}
        	{(S^n \wedge O_+)_s} && {X_s} \\
        	{} \\
        	{(S^n \wedge O_+)_t} && {X_t}
        	\arrow["{X_f}"', from=1-3, to=3-3]
        	\arrow["{(S^n \wedge O_+)_f}"', from=1-1, to=3-1]
        	\arrow["{\alpha_s}", from=1-1, to=1-3]
        	\arrow["{\alpha_t}", from=3-1, to=3-3]
        \end{tikzcd}\]
            
        Since $X_f$ is an identity morphism, $\alpha_s$ is uniquely determined as the composite $\alpha_t \circ (S^n \wedge O_+)_f$. Thus, the homotopy classes of $\mathbb{J}$-equivariant maps from $S^n \wedge O_+$ to $X$ can be identified with the non-equivariant homotopy classes of maps from $(S^n \wedge O_+)_t$ to $X_t$. Now recall that when $O$ is a $\mathbb{J}$-orbit, $O_t$ is a one-point space. Thus, $(S^n \wedge O_+)_t \cong S^n$, so $\pi_n^O(X) \cong \pi_n(X_t)$. Because this identification can be made compatibly for each orbit, we conclude that $\pi_n^*(X)$ is a constant functor.
        \end{proof}
        
In the above example, we didn't get any interesting orbit data. This was just because $X$ only had one orbit type. To see a more general behavior, let's revisit the case where $X_s = \{ \pt \}$ and $X_t = S^m$:

    \example Let $X$ be the $\mathbb{J}$-space with $X_s = \{ \pt \}$ and $X_t = S^m$. Then, $\pi_n^{[0]}(X) = \pi_n(S^m)$ and $\pi_n^O(X) = 0$ for all other orbits $O$. This uniquely determines the structure maps.

        \begin{proof}
            We will use the notation of the previous example. Since $X_s$ consists of a single point, the composite $X_f \circ \alpha_s$ must send all points of $(S^n \wedge O_+)_s$ to the base point of $X_t$. When $O$ is not the orbit $[0]$, the map $(S^n \wedge O_+)_f$ is surjective, which means $\alpha_t$ must send all points of $(S^n \wedge O_+)_t$ to the base point of $X_t$. Hence, $\pi_n^O(X) = 0$. When $O$ is the orbit $[0]$, we know that $(S^n \wedge O_+)_s \cong \{ \pt \}$ and $(S^n \wedge O_+)_t \cong S^n$. This means $\alpha_t$ can be any map from $S^n$ to $S^m$, so $\pi_n^{[0]}(X) \cong \pi_n(S^m)$. 
        \end{proof}
    
The next example illustrates that we care about the structure maps of $\pi_n^*(X)$, not just its evaluation on objects.

    \example Let $X$ be the $\mathbb{J}$-space with $X_s=S^1$ and $X_t=S^1$, but where $X_f$ is the ``double counter-clockwise winding'' map, hereafter denoted ``$2$.'' (If one views $S^1$ as the unit sphere in $\mathbb{C}$, this is the map given by $z \mapsto z^2$.) Then, $\pi_n^O(X) = \pi_n(S^1)$ for all orbits $O$. For any $\mathbb{J}$-equivariant map $g:O_1 \rightarrow O_2$, $\pi_n^g(X)$ is multiplication by $2$ when $O_1$ is the orbit $[0]$ and $O_2$ is a non-$[0]$ orbit; otherwise, $\pi_n^g(X)$ is the identity map. 
    
        \begin{proof}
            For any morphism ${\alpha: (S^n \wedge O_+) \rightarrow X}$, we have the commutative square

            \[\begin{tikzcd}
            	{(S^n \wedge O_+)_s} && {S^1} \\
            	{} \\
            	{(S^n \wedge O_+)_t} && {S^1}
            	\arrow["2"', from=1-3, to=3-3]
            	\arrow["{(S^n \wedge O_+)_f}"', from=1-1, to=3-1]
            	\arrow["{\alpha_s}", from=1-1, to=1-3]
            	\arrow["{\alpha_t}", from=3-1, to=3-3]
            \end{tikzcd}\]
        
            Note that $(S^n \wedge O_+)_s  \cong \bigvee_{i \in O_s} S^n$ and $(S^n \wedge O_+)_t  \cong S^n$. Thus, when $n \geq 2$, any map $\alpha_t$ is nullhomotpic. Furthermore, we can lift any nullhomotopy on $\alpha_t$ to a compatible one on $\alpha_s$ because $(S^n \wedge O_+)_f$ is an isomorphism on each of the wedge factors. Hence, $\pi_n^O(X) = 0$ when $n \geq 2$, so we only need to consider the $n=1$ case.
            
            Classically, we know that any pointed map from $\bigvee_{i \in O_s} S^1$ to $S^1$ has a unique pointed lift via the map $X_f = 2$. In particular, there must be only one lift of $\alpha_t \circ (S^1 \wedge O_+)_f$. Let's compare $\alpha_s$ with another such lift: 
            
            When $O$ is not the orbit $[0]$, there exists maps $h:O_t \rightarrow O_s$ such that $O_f \circ h = \id_{O_t}$. Thus, we have a map $\widetilde{\alpha_t}: (S^1 \wedge O_+)_t \rightarrow S^1$ given by $\widetilde{\alpha_t}$ = $\alpha_s \circ (S^1 \wedge h)$. However, we can consider the diagram

            \[\begin{tikzcd}
            	{(S^1  \wedge O_+)_t} \\
            	\\
            	& {(S^1 \wedge O_+)_s} && {S^1} \\
            	& {} \\
            	& {(S^1 \wedge O_+)_t} && {S^1}
            	\arrow["2", from=3-4, to=5-4]
            	\arrow["{(S^1 \wedge O_+)_f}", from=3-2, to=5-2]
            	\arrow["{\alpha_s}", from=3-2, to=3-4]
            	\arrow["{\alpha_t}", from=5-2, to=5-4]
            	\arrow["h", from=1-1, to=3-2]
            	\arrow["{\widetilde{\alpha_t}}", dashed, from=1-1, to=3-4]
            	\arrow["{\id_{(S^1 \wedge O_+)_t}}"', from=1-1, to=5-2]
            \end{tikzcd}\]
            
            and compute that $2 \circ \widetilde{\alpha_t} = 2 \circ \alpha_s \circ h = \alpha_t \circ (S^1 \wedge O_+)_f \circ h = \alpha_t.$
            
            Thus, $\widetilde{\alpha_t} \circ (S^1 \wedge O_+)_f$ is a pointed lift of $\alpha_t \circ (S^1 \wedge O_+)_f$. By the uniqueness of pointed lifts, this means $\alpha_s = \widetilde{\alpha_t} \circ (S^1 \wedge O_+)_f$. In other words, we have a commutative diagram:

            \[\begin{tikzcd}
            	{(S^1 \wedge O_+)_s} && {S^1} \\
            	{} \\
            	{(S^1 \wedge O_+)_t} && {S^1}
            	\arrow["2"', from=1-3, to=3-3]
            	\arrow["{(S^1 \wedge O_+)_f}"', from=1-1, to=3-1]
            	\arrow["{\alpha_s}", from=1-1, to=1-3]
            	\arrow["{\alpha_t}", from=3-1, to=3-3]
            	\arrow["{\tilde{\alpha_t}}", dashed, from=3-1, to=1-3]
            \end{tikzcd}\]
            
            Assuming still that $O \neq [0]$, observe that a $\mathbb{J}$-equivariant map from $(S^1 \wedge O_+)$ to $X$ thus uniquely determines a map from the constant $\mathbb{J}$-space $(S^1 \wedge O_+)_t$ to $X$, and vice versa. We could repeat the same argument. replacing $(S^1 \wedge O_+)$ with $(S^1 \wedge O_+) \times I$, to get a lifting of $D$-equivariant homotopies. We also note that the constant $\mathbb{J}$-space $(S^1 \wedge O_+)_t$ is isomorphic to $(S^1 \wedge [1]_+)$. Thus, when $O \neq [0]$, we compute \[\pi_1^O(X) = [S^1 \wedge O_+, X]^{\mathbb{J}} \cong [S^1 \wedge [1], X]^{\mathbb{J}} = [S^1, X_s] = \pi_1(S^1).\] These isomorphisms specify that the structure maps $\pi_1^g(X)$ are isomorphisms for $g:O_1 \rightarrow O_2$ when neither $O_1$ nor $O_2$ are $[0]$. 
            
            When $O = [0]$, we compute \[\pi_1^{[0]}(X) = [S^1 \wedge [0]_+, X]^{\mathbb{J}} \cong [S^1,X^{[0]}] \cong [S^1, X_t] = [S^1,S^1] = \pi_1(S^1).\] Now, we just need to determine the unresolved structure maps. Since $\pi_1^g(X)$ is an isomorphism when $O_1$ and $O_2$ are not $[0]$, we only need to consider the unique map $j:[0] \rightarrow [1]$. (All of the other unresolved maps are obtained by composing this map with some already-known isomorphism.)
            
            Recall again that $[0]$ and $[1]$ are isomorphic to the free orbits $\mathbb{J}(t,-)$ and $\mathbb{J}(s,-)$, respectively. Under this identification, $j:[0] \rightarrow [1]$ becomes $\mathbb{J}(f,-)$. This means the structure map from \[\pi_1^{[1]}(X) = [S^1 \wedge [1]_+,X]^{\mathbb{J}} \cong [S^1, X_s] = \pi_1(X_s) \] to \[\pi_1^{[1]}(X) = [S^1 \wedge [0]_+,X]^{\mathbb{J}} \cong [S^1, X_t] = \pi_1(X_t) \] is given by $\pi_1(X_f)$, which is multiplication by 2.
            
        \end{proof}

Comparing this example with the preceding one about constant $\mathbb{J}$-spaces shows why we needed to have structure maps: without the maps, the $X$ above would have been indistinguishable from the constant $\mathbb{J}$-space $S^1$. For our last two examples, let's see how orbits other than $[0]$ and $[1]$, the free obits, can provide useful data: 

    \example Let $X$ be the pointed $\mathbb{J}$-space where $X_s = S^m$, where $X_t = S^{\infty} = \bigcup_{n \in \mathbb{N}} S^n$, and where $X_f$ is the inclusion of $S^m$ into $S^{\infty}$. Then, $\pi_n^{[0]}(X) = 0$, while $\pi_n^O(X) = \pi_n(X_s)$ for all other obits. Given any $g: O_1 \rightarrow O_2$ where $O_1$ and $O_2$ are not $[0]$, $\pi_n^g(X)$ is the identity map.
    
        \begin{proof}
            For $O \neq [0]$, $(S^n \wedge O_+)_f$ and $(S^n \wedge O_+ \times I)_f$ are surjective, so any map (or homotopy of maps) from $S^n \wedge O_+$ to $X$ is factored by the inclusion of the constant $\mathbb{J}$-space $S^m$ into $X$. Thus, for $O \neq [0]$ (and the structure maps between such $O$), $\pi_n^O(X)$ agrees with $\pi_n^O(S^m) \cong \pi_n(S^m)$.
        \end{proof} 
        
We contrast this with the following:

    \example Let $Y$ be the pointed $\mathbb{J}$-space with $Y_s = S^m$ and $Y_t = \{ \pt \}$. Then, $\pi_n^{[i]}(Y) \cong \prod_{z \in [i]_s} \pi_n(S^m)$, and $g:[j] \rightarrow [i]$ acts by sending $(z_1, \dots, z_i)$ to $(z_{g(1)}, \dots , z_{g(j)})$.
    
        \begin{proof}
             Because $Y_t$ is terminal, the data of a map $\alpha$ from $S^n \wedge [i]_+$ to $Y$ is the same as the data of $\alpha_s: (S^n \wedge [i]_+)_s \rightarrow Y_s$. Since $(S^n \wedge [i]_+)_s$ is homeomorphic to the $i$-fold wedge product $S^n \vee \dots \vee S^n$, we have that \[\pi_n^{[i]}(Y) \cong [S^n \vee \dots \vee S^n, S^m] \cong \prod_{z \in [i]_s} \pi_n(S^m),\] where the last isomorphism follows from the fact that $\vee$ is the coproduct in the category of pointed topological spaces with homotopy classes of maps. Our description of $\pi_n^g(Y)$ then follows from chasing through the two isomorphisms.
        \end{proof}

In these last two examples, $\pi_n^{[0]}(X)$, $\pi_n^{[1]}(X)$, and $\pi_n^{[0] \rightarrow [1]}(X)$ agree with their counterparts for $Y$. Only by using the other orbits can we homotopically distinguish between $X$ and $Y$. This is desirable because while $X_s \simeq Y_s$ and $X_t \simeq Y_t$ are homotopy equivalent \textit{as spaces}, $X$ and $Y$ are not homotopic as $\mathbb{J}$-spaces. This is precisely analogous to distinguishing between $G$-spaces whose underlying spaces are homotopy equivalent but which are not equivariantly homotopy equivalent.

\section{$D$-CW-Complexes and $D$-cell complexes}\label{section:complexes}

As in the non-equivariant case, we have a notion of CW-complexes, objects that are completely described by homotopy group functors. The material here largely follows the original exposition given by Dror Fajoun and Zabrodsky, with some more modern updates. We will need $D$-cell complexes for our proof of the diagram-equivariant Elmendorf's theorem because they are used to build the cofibrant objects of $\Top^D$ and $\Top^{\mathcal{O}_{\mathcal{F}}^{op}}$.

    \definition[{\cite[Definition 1.2]{DF}}] Given a collection of orbits $\mathcal{F}$ of a small category $D$ and a $D$-space $X$, a \textit{relative $D$-CW structure of type $\mathcal{F}$ on $X$} is a sequence of $D$-spaces \[X^{-1} \hookrightarrow X^0 \hookrightarrow \ X^1 \hookrightarrow \dots \hookrightarrow X^n \hookrightarrow \dots \hookrightarrow X\] such that for each $i \geq 0 $, $X^i$ is obtained from $X^{i-1}$ as a pushout

    \[\begin{tikzcd}
    	{S^{i-1} \times A_i} && {X^{i-1}} \\
    	\\
    	{D^i \times A_i} && {X^i}
    	\arrow[hook, from=1-1, to=3-1]
    	\arrow[from=1-1, to=1-3]
    	\arrow[hook, from=1-3, to=3-3]
    	\arrow[from=3-1, to=3-3]
    	\arrow["\lrcorner"{anchor=center, pos=0.125, rotate=180}, draw=none, from=3-3, to=1-1]
    \end{tikzcd}\] where each $A_i$ is a disjoint union of $D$-orbits in $\mathcal{F}$. If $X^{-1}$ is the constant empty $D$-space, we drop the word relative and say that \[X^0 \hookrightarrow \ X^1 \hookrightarrow \dots \hookrightarrow X^n \hookrightarrow \dots \hookrightarrow X\] is a \textit{$D$-CW structure on $X$}.
    
As in the non-equivariant case, a map $\alpha: X \rightarrow Y$ of $D$-CW-complexes is a $D$-homotopy equivalence if an only if all $n$th homotopy group functors (including $n=0$) induce isomorphisms. This result is usually called ``Whitehead's theorem'' in the non-equivariant case and ``Bredon's theorem'' in the group-equivariant case. We now present the diagram-equivariant case, which was proven by Dror Farjoun and Zabrodsky.

    \definition[{\cite[Definition 2.2]{DF}}] Let $\mathcal{F}$ be a collection of orbits of a small category $D$. We say a $D$-space $X$ is \textit{of type $\mathcal{F}$} if \[\mathcal{O}_X = \{ O_x \mid x \in \Colim(X) \} \subseteq \mathcal{F}.\] 
    
    \thm[{\cite[Theorem 4.1]{DF}}] Let $D$ be a small category and let $\alpha: X \rightarrow Y$ be a $D$-equivariant map of $D$-CW-complexes of type $\mathcal{F}$. Then, $\alpha$ is a $D$-homotopy equivalence if and only if $\alpha^O:X^O \rightarrow Y^O$ is a homotopy equivalence of spaces for all $O \in \mathcal{F}$. (That is, that $\pi_n^O(\alpha):\pi_n^O(X) \rightarrow \pi_n^O(Y)$ is a isomorphism for all $n \in \mathcal{N}$ and $O \in \mathcal{F}$.)

In the previous section, we saw that $X = (S^m \hookrightarrow S^{\infty})$ and $Y = (S^m \rightarrow \{ \pt \})$ were not $\mathbb{J}$-homotopy equivalent. Since $X$ is of type $\mathcal{O}_X = \{ [0], [1] \}$ and $Y$ is of type $\mathcal{O}_Y = \{ Y \}$, ($Y$ is an orbit!) we know that any $\alpha:X \rightarrow Y$ must have $\pi_n^O(\alpha)$ fail to be an isomorphism for some $n \in \mathbb{N}$ and $O \in \{ [0], [1], Y \}$. Back then, we showed that $\pi_n^{[i]}(X)$ and $\pi_n^{[i]}(Y)$ were not isomorphic for $i \geq 2$ and any $n$ where  $\pi_n(S^m) \neq 0$. The theorem above says we could have simply checked $\pi_m^Y(X)$ and $\pi_m^Y(Y)$.

In general, once one has a $D$-CW structure on $X$, it's straightforward to know which orbits to check:

    \prop\label{CWType} If $X$ has (non-relative) $D$-CW structure of type $\mathcal{F}$, then $X$ is a $D$-space of type $\mathcal{F}$.
        \begin{proof}
            Let $X$ have a non-relative $D$-CW structure of type $\mathcal{F}$, and consider any point $x_d \in X_d$ for any object $d \in D$. We wish to show that $O_{x_d} \in \mathcal{F}$. By construction, $x_d$ is a point in the interior of $D^i \times A_i$ for exactly one $i \in \mathbb{N}$. By equivariance, each point in $O_{x_d}$ must also be a point in the interior of $D^i \times A_i$. Thus, the orbit type of $x_d$ in $X$ must be the same as its orbit type in $D^i \times A_i$. Because the latter is an orbit type in $\mathcal{F}$ and $x_d$ is a generic point, we're done.
        \end{proof}

Like in the group-equivariant case, $D$-CW-complexes are tame combinatorial objects that allow us to isolate certain homotopical behavior. However, sometimes we want to remove the restriction that higher-dimensional cells only attach onto lower-dimensional cells. When we get rid of this restriction, we get the more general notion of \textit{$D$-cell complexes}. $D$-cell complexes enable us to provide nice descriptions of the cofibrations in $\Top^D$ and $\Top^{\mathcal{O}_{\mathcal{F}}^{op}}$, which we will need to prove our generalization of Elmendorf's theorem.

    \definition Given a collection of orbits $\mathcal{F}$ of a small category $D$ and a $D$-space $X$, a \textit{$D$-cell structure of type $\mathcal{F}$ on $X$} is a (potentially transfinite) sequence of pushouts of the form:

    \[\begin{tikzcd}
    	{S^{n-1} \times O_{\mu}} && {X_{\mu}} \\
    	\\
    	{D^n \times O_{\mu}} && {X_{\mu+1}}
    	\arrow[from=1-1, to=1-3]
    	\arrow[from=1-3, to=3-3]
    	\arrow[from=1-1, to=3-1]
    	\arrow[from=3-1, to=3-3]
    \end{tikzcd}\] such that $\Colim(X_\mu) = X$, where each $O_{\mu} \in \mathcal{F}$,  and where $n \geq 0$ is allowed to vary with respect to $\mu$. (Here, the $(-1)$-sphere is the empty space.)

    That is, there is an ordinal $\lambda$ such that $X = \Colim_{\nu \leq \lambda}X_{\nu}$. For successor ordinals $\lambda = \mu+1$, $X_{\lambda}$ is obtained by the above pushout. For limit ordinals $\lambda$, $X_{\lambda} = \Colim_{\nu < \lambda}X_{\nu}$. If $X_0$ is the constant empty $D$-space, we drop the word relative and say $X$ is a \textit{$D$-cell complex of type $\mathcal{F}$}.
    
    As with $D$-CW-complexes, $D$-cell complexes of type $\mathcal{F}$ are spaces of type $\mathcal{F}$:
    
    \prop If $X$ is a (non-relative) $D$-cell complex of type $\mathcal{F}$, then $X$ is a $D$-space of type $\mathcal{F}$.
        \begin{proof}
            Take the proof of proposition \ref{CWType} and replace ``$D$-CW structure'' with ``$D$-cell structure.''
        \end{proof}
    
    \example Any relative $D$-CW-complex is a relative $D$-cell complex.
        \begin{proof}
            Let $X$ have a relative $D$-CW structure. By definition, each $A_i$ involved in the construction is a disjoint union of orbits $O_{\alpha}$. By the usual axioms of set theory, this collection of orbits can be well-ordered. We can then order the orbits of $A_0, A_1, \dots$ lexicographically and get a new well-ordered set of all the orbits involved.  This corresponds to some ordinal $\lambda$ which we will now use for labeling. For any orbit $O_{\mu}$, we build our attaching pushout as  
            \[\begin{tikzcd}
            	&& {} \\
            	&& {X^{n-1}} \\
            	{S^{n-1} \times O_{\mu}} && {X_{\mu}} \\
            	\\
            	{D^n \times O_{\mu}} && {X_{\mu+1}}
            	\arrow[from=3-1, to=3-3]
            	\arrow[from=3-3, to=5-3]
            	\arrow[from=3-1, to=5-1]
            	\arrow[from=5-1, to=5-3]
            	\arrow[hook, from=2-3, to=3-3]
            	\arrow[from=3-1, to=2-3]
            	\arrow["\lrcorner"{anchor=center, pos=0.125, rotate=180}, draw=none, from=5-3, to=3-1]
            \end{tikzcd}\] where the inclusion $X^n \hookrightarrow X_{\mu}$ is guaranteed by our lexicographic ordering and where the map $S^{n-1} \times O_{\mu} \rightarrow X^n$ is given by the disjoint union decomposition of $A_n$ (and corresponding decomposition of $S^{n-1} \times A_n$). $X_0$ is defined as $X^{-1}$.
        \end{proof}
    
From the last sentence of the proof, we are also able to conclude:

    \coro Any $D$-CW-complex is a $D$-cell complex.

\section{Model Structures on $\mathrm{Top}^D$}\label{section:models}
We can now establish a model structure on $\Top^D$ from that on $\Top$:

    \definition[{\cite{Quillen}}] The \textit{classical model structure} on $\Top$ is given by: 
        \begin{itemize}
            \item Weak equivalences are weak homotopy equivalences (that is, maps that induce isomorphisms for all $\pi_n$).
            \item Fibrations are ``Serre fibrations.''
            \item Cofibrations are retracts of relative cell complexes. 
        \end{itemize}
    As in any model category, a choice of two of $\{ \textrm{Fibrations}, \textrm{Cofibrations}, \textrm{Weak Equivalences} \}$ uniquely determines the third. It is thus a theorem that the weak equivalences and fibrations described above determine the cofibrations of the definition. 

From this, we can build a model structure on $\Top^C$ for any (possibly large) category, $C$. The model structure we're about to describe will most often be used on $C = \Top^{\mathcal{O}_{\mathcal{F}}^{op}}$, where $\mathcal{O}_{\mathcal{F}}$ is the full subcategory of $\Top^D$ whose objects are orbits $O \in \mathcal{F}$. We'll use a different model structure on $\Top^D$, which is why we're using the letter ``$C$'' here. 

    \definition \label{projmodel} Given a (possibly large) category $C$, the \textit{projective model structure on $\Top^C$} is given by the following conditions: 
        \begin{itemize}
            \item Weak equivalences $\beta:R \rightarrow S$ are such that $\beta_X$ is a weak homotopy equivalence for each object $X \in C$.
            \item Fibrations $\beta:R \rightarrow S$ are such that $\beta_X$ is a Serre fibration for each object $X \in C$.
            \item Cofibrations are retracts of relative $C$-cell complexes of type $\Free$, the collection of free orbits of $C$.
        \end{itemize}

The description of the cofibrations follows from \cite[Theorem 11.6.1]{Hirschhorn}. We now give our model structure on $\Top^D$:

    \definition \label{Fmodel} Let $D$ be a small category and let $\mathcal{F}$ be some collection of $D$-orbits that contains all of the free orbits. Then, the \textit{$\mathcal{F}$-model structure} on $\Top^D$ is given by:
        \begin{itemize}
            \item Weak equivalences $\alpha: X \rightarrow Y$ are such that $\Top^D(O,\alpha): \Top^D(O,X) \rightarrow \Top^D(O,Y)$ is a weak equivalence for each $O \in \mathcal{F}$.
            \item Fibrations $\alpha: X \rightarrow Y$ are such that $\Top^D(O,\alpha): \Top^D(O,X) \rightarrow \Top^D(O,Y)$ is a Serre fibration for each $O \in \mathcal{F}$.
        \end{itemize}

Our main theorem, which is proven in the next section, is that there is a Quillen equivalence,  
\[\begin{tikzcd}
	{\Top^D} \\
	\\
	{\Top^{\mathcal{O}_{\mathcal{F}}^{op}}}
	\arrow[""{name=0, anchor=center, inner sep=0}, "K", curve={height=-30pt}, from=3-1, to=1-1]
	\arrow[""{name=1, anchor=center, inner sep=0}, "\Phi", curve={height=-30pt}, from=1-1, to=3-1]
	\arrow["\dashv"{anchor=center}, draw=none, from=0, to=1]
\end{tikzcd}\]  where $\Top^D$ has the $\mathcal{F}$-model structure and $\Top^{\mathcal{O}_{\mathcal{F}}^{op}}$ has the projective model structure. The fact that this is a Quillen adjunction allows us to describe the cofibrations in the $\mathcal{F}$-model structure, which is the subject of Proposition \ref{FCofibs}

\section{Elmendorf's Theorem} \label{section:Elmendorf}

We now prove our diagram-equivariant version of Elmendorf's theorem. Our approach follows a modern treatment of the group-equivariant case by Marc Stephan \cite{Stephan}. There is a similar theorem for simplicial sets given by Dwyer and Kan \cite{Dwyer-Kan} that uses a different notion of ``orbit.'' We state our theorem as Theorem \ref{elmendorf} and spend the rest of this paper proving it.

\thm \label{elmendorf}
Let $D$ be a small category, let $\mathcal{F}$ be some collection of orbits of $D$ that contains all of the free orbits, and let $\mathcal{O}_{\mathcal{F}} \subseteq \Top^D$ be the full subcategory spanned by $\mathcal{F}$. Then, there is a Quillen equivalence \[K:\Top^{\mathcal{O}_{\mathcal{F}}^{op}} \simeq \Top^D: \Phi,\] where $\Top^{\mathcal{O}_{\mathcal{F}}^{op}}$ has the projective model structure and $\Top^D$ has the $\mathcal{F}$-model structure.

    \definition The functors that comprise the Quillen equivalence are: 
        \begin{itemize}
            \item $K:\Top^{\mathcal{O}_{\mathcal{F}}^{op}} \rightarrow \Top^D$ is defined by $K(R) = R \circ i$, where $i$ is the inclusion of $D$ into $\mathcal{O}_{\mathcal{F}}^{op}$ via the Yoneda embedding $d \mapsto F^D$. 
            \item $\Phi: \Top^D \rightarrow \Top^{\mathcal{O}_{\mathcal{F}}^{op}}$ is defined by $\Phi(X)(O) = \Top^D(O,X)$ for all $O \in \mathcal{F}$.
        \end{itemize}

To prove Theorem \ref{elmendorf}, we will show first show that $(K,\Phi)$ is an adjunction, then show that $(K,\Phi)$ is a Quillen adjunction, and then finally show that $(K,\Phi)$ is in fact a Quillen equivalence.

\begin{lemma}
$K$ is a left inverse to $\Phi$. That is, there is a natural isomorphism $K\Phi \cong \id_{\Top^D}$.
\end{lemma}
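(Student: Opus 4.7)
The plan is to recognize this as essentially the enriched Yoneda lemma. First I would unpack the definitions: for a $D$-space $X$ and object $d \in D$, the definition of $K$ together with the Yoneda embedding $i(d) = F^d$ gives
\[
(K\Phi(X))_d \;=\; \Phi(X)(F^d) \;=\; \Top^D(F^d, X) \;=\; \Top^D(D(d,-), X).
\]
The Yoneda lemma supplies a bijection $\eta_{X,d}\colon \Top^D(D(d,-), X) \to X_d$, $\alpha \mapsto \alpha_d(\id_d)$, with inverse sending $x \in X_d$ to the natural transformation whose component at $d'$ is $f \mapsto X_f(x)$. The goal is to promote this family $\{\eta_{X,d}\}$ to a natural isomorphism $\eta_X\colon K\Phi(X) \to X$ of $D$-spaces, with $\eta_X$ natural in $X$.

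Next I would verify the three compatibility checks. (i) Naturality in $d$: for $f\colon d \to d'$, the structure map $(K\Phi(X))_f$ is precomposition with $D(f,-)\colon F^{d'} \to F^d$, so $\eta_{X,d'}((K\Phi(X))_f(\alpha)) = (\alpha \circ D(f,-))_{d'}(\id_{d'}) = \alpha_{d'}(f)$. Using naturality of $\alpha$ applied to the element $\id_d \in F^d(d)$, this equals $X_f(\alpha_d(\id_d)) = X_f(\eta_{X,d}(\alpha))$, so the required square commutes. (ii) Naturality in $X$: for $\phi\colon X \to Y$, one has $\eta_{Y,d}(K\Phi(\phi)_d(\alpha)) = \eta_{Y,d}(\phi \circ \alpha) = \phi_d(\alpha_d(\id_d)) = \phi_d(\eta_{X,d}(\alpha))$, which is immediate. (iii) Continuity: both $\eta_{X,d}$ (evaluation at $\id_d$) and its inverse (assembling the components $f \mapsto X_f(x)$) are built from evaluation and the structure maps $X_f$, so they are continuous under the compactly generated weak Hausdorff convention from Section~\ref{section:DProperties}.

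The main obstacle, if there is one, is purely bookkeeping around the contravariant embedding $i\colon D \to \mathcal{O}_{\mathcal{F}}^{op}$: one must keep track of the fact that a morphism $f\colon d \to d'$ in $D$ is sent to the $\Top^D$-morphism $D(f,-)\colon F^{d'} \to F^d$ (going the ``wrong'' way), so that $(K\Phi(X))_f$ covariantly carries $\Top^D(F^d,X)$ to $\Top^D(F^{d'},X)$. Once this orientation is settled, step~(i) above confirms that $\eta$ intertwines the structure maps correctly. All other content is the Yoneda lemma, so no further work is required.
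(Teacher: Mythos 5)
Your proof is correct and takes essentially the same approach as the paper, which also deduces $K\Phi\cong\id_{\Top^D}$ from the Yoneda lemma together with its naturality in both arguments. You simply make the bookkeeping explicit (writing out the two naturality squares and addressing continuity) where the paper cites naturality of Yoneda more tersely.
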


    \begin{proof}
    Let $X$ be a $D$-space. By definition, $K\Phi (X)$ is the $D$-space given by \[[K\Phi (X)]_d = \Top^D(F^d,X).\] But by the Yoneda lemma, $\Top^D(F^d,X) \cong X_d$. The naturality of the Yoneda lemma in the first argument thus tells us that that $K\Phi (X) \cong X$. The naturality of the Yoneda lemma in the second argument allows us to then conclude that $K\Phi \cong \id_{\Top^D}$.
    \end{proof}

\begin{prop} 
$(K,\Phi)$ is an adjunction.
\end{prop}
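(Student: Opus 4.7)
The plan is to exhibit $(K,\Phi)$ as an adjoint pair using unit-counit data. Observe first that $K = i^*$ is restriction along the functor $i\colon D \to \mathcal{O}_{\mathcal{F}}^{op}$. Since $\Top$ is complete, restriction along $i$ admits a right adjoint given by pointwise right Kan extension, and my strategy is to identify $\Phi$ with this right Kan extension. Conveniently, the counit $\epsilon\colon K\Phi \to \id_{\Top^D}$ is already in hand: it is the natural isomorphism just established in the previous lemma. The task thus reduces to constructing the unit $\eta\colon \id_{\Top^{\mathcal{O}_{\mathcal{F}}^{op}}} \to \Phi K$ and checking the triangle identities.

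To construct the unit, for each $R \in \Top^{\mathcal{O}_{\mathcal{F}}^{op}}$ and $O \in \mathcal{F}$, I would define $\eta_{R,O}\colon R(O) \to \Top^D(O,K(R))$ by sending $r \in R(O)$ to the natural transformation whose $d$-component maps $o_d \in O_d$ to $R(\hat{o}_d)(r) \in R(F^d) = [K(R)]_d$, where $\hat{o}_d\colon F^d \to O$ is the morphism in $\mathcal{O}_{\mathcal{F}}$ corresponding to $o_d$ via Yoneda. Equivalently, one can spell out the hom-set bijection directly: given $\alpha\colon K(R) \to X$, define $\tilde{\alpha}\colon R \to \Phi(X)$ by $\tilde{\alpha}_O(r)_d(o_d) = \alpha_d(R(\hat{o}_d)(r))$; and given $\beta\colon R \to \Phi(X)$, define $\hat{\beta}_d = \beta_{F^d}$ post-composed with the Yoneda isomorphism $\Phi(X)(F^d) \cong X_d$. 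The two constructions are mutually inverse.

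The remaining checks are: (i) $\eta_{R,O}(r)$ is genuinely natural in $d$, which reduces to the Yoneda identity $\widehat{O_f(o_d)} = \hat{o}_d \circ D(f,-)$ together with functoriality of $R$; (ii) continuity of each $\eta_{R,O}$; (iii) naturality of $\eta_R$ in $O$; (iv) naturality of $\eta$ in $R$; and (v) the triangle identities, each of which unwinds to an application of Yoneda at free orbits $F^d$. The main obstacle, such as it is, is bookkeeping: the contravariance of $R$ on $\mathcal{O}_{\mathcal{F}}$, the covariance of everything on $D$, and the repeated invocation of the Yoneda isomorphism $\Top^D(F^d,X) \cong X_d$ must all be threaded carefully through every commutative square. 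No ingredient beyond Yoneda is required; the entire argument is a careful sequence of small diagram chases.
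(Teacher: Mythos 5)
Your proposal is correct and, once the unit--counit packaging is unwound, coincides with the paper's argument: your unit formula $\eta_{R,O}(r)_d(o_d) = R(\hat{o}_d)(r)$ is exactly the paper's adjunct formula $g(O)(r)_d(o_d)=f(R(o_d^*)(r))$ specialized to $f = \id_{K(R)}$, your inverse is the same Yoneda evaluation at $\id_d$, and the checklist you enumerate is exactly the set of naturality and triangle verifications the paper spells out. The one place where your framing outruns what you can invoke off the shelf is the opening claim that $\Phi$ ``is'' the pointwise right Kan extension along $i$ and that the adjunction therefore exists for free. That identification is correct only for the $\Top$-\emph{enriched} right Kan extension, computed by the end $\int_{d} \Top(O_d,X_d) = \Top^D(O,X)$; the ordinary ($\Set$-based) pointwise $\mathrm{Ran}_i X$ evaluated at $O$ is the limit of $X$ over the translation category $B_D(O)$, which imposes no continuity in the orbit variable and hence disagrees with $\Phi(X)(O)$ whenever the orbit $O$ carries non-discrete topology (which it may, since a $D$-orbit is any $D$-space with terminal colimit). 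Since you go on to construct $\eta$ explicitly and check the triangle identities rather than relying on the Kan-extension shortcut, this does not invalidate your proof, but if you wanted the one-line deduction you would need to cite the enriched Kan extension theorem, and would then also owe a justification that the enriched functor category you are taking Kan extensions into is the same $\Top^{\mathcal{O}_{\mathcal{F}}^{op}}$ the paper works with.
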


    \begin{proof}
    We will construct a natural isomorphism \[\Top^D(K(R),X) \cong \Top^{\mathcal{O}_{\mathcal{F}}^{op}}(R,\Phi (X)),\] where $R$ is a generic $\mathcal{O}_{\mathcal{F}}^{op}$-space and $X$ is a generic $D$-space. In this context, given a $D$-equivariant map ${f \colon K(R) \rightarrow X}$, its adjunct is the $\mathcal{O}_{\mathcal{F}}^{op}$-equivariant map $g: R \rightarrow \Phi (X)$ defined as follows:
    
    For any $O \in \mathcal{F}$, \[g(O): R(O) \rightarrow [\Phi (X)](O) = \Top^D(O,X)\] is the continuous map that sends $r \in R(O)$ to the $D$-equivariant map \[[g(O)](r):O \rightarrow X,\] where $[g(O)](r)$ is defined by \[([g(O)](r))_d(o_d) = f(R(o_d^*)(r))\] for all $d \in D$ and $o_d \in O_d$. (Here, $o_d^* \colon F^d \rightarrow O$ is the unique $D$-equivariant map that sends $\id_d$ to $o_d$.)
    
    For this construction to be valid, we need to check that $[g(O)](r)$ is indeed $D$-equivariant and then that $g$ is indeed $\mathcal{O}_{\mathcal{F}}^{op}$-equivariant. This just means that $[g(O)](r)$ and $g$ need to be natural transformations, so we check the corresponding naturality squares. We'll begin with $[g(O)](r)$:
    
    Let $\alpha : d \rightarrow d'$ be a morphism in $D$. To see that the square

    \[\begin{tikzcd}
    	{O_d} && {X_d} \\
    	\\
    	{O_{d'}} && {X_{d'}}
    	\arrow["{O_{\alpha}}"', from=1-1, to=3-1]
    	\arrow["{X_{\alpha}}", from=1-3, to=3-3]
    	\arrow["{([g(O)](r))_d}", from=1-1, to=1-3]
    	\arrow["{([g(O)](r))_{d'}}"', from=3-1, to=3-3]
    \end{tikzcd}\] commutes, consider a generic element $o_d \in O_d$ and observe that:
        \begin{enumerate}
            \item $[O_{\alpha}(o_d)]^* = o_d^* \circ \Top^D(\alpha, -)$ because the maps agree on $\id_{d'}$ and are $D$-equivariant.
            \item Applying $R$ to both sides gives us $R([O_{\alpha}(o_d)]^*) = R(\Top^D(\alpha, -)) \circ R(o_d^*)$. (R is contravariant!)
            \item By definition of $K$, $K(R)_d = R(F^d)$, $K(R)_{d'} = R(F^{d'})$, and $K(R)_{\alpha} = R(\Top^D(\alpha, -))$, so the previous line can be rephrased as $R([O_{\alpha}(o_d)]^*) = K(R)_{\alpha} \circ R(o_d^*)$.
            \item Since $f: K(R) \rightarrow X$ is a $D$-equivariant map, $f_{d'} \circ K(R)_{\alpha} = X_{\alpha} \circ f_d$. 
            \item Thus, combining the previous three steps, we see that \[f_{d'} \circ R([O_{\alpha}(h_d)]^*) = f_{d'} \circ R(\Top^D(\alpha, -)) \circ R(o_d^*) = X_{\alpha} \circ f_d \circ R(o_d^*).\]
            \item The above are all continuous maps from $R(O)$ to $X_{d'}$. Hence, for any $r \in R(O)$, $f_{d'} \circ R([O_{\alpha}(o_d)]^*)(r) =  X_{\alpha} \circ f_d \circ R(o_d^*) (r)$ as elements of $X_{d'}$.
            \item By definition of $g(O)(r)$, this shows that $[g(O)(r)]_{d'} \circ O_{\alpha} (o_d) = X_{\alpha} \circ [g(O)(r)]_d (o_d)$. Since $o_d$ was arbitrary, our square commutes and we conclude that $g(O)(r)$ is indeed $D$-equivariant.
        \end{enumerate}
        
    Now let's confirm that $g$ is $\mathcal{O}_{\mathcal{F}}^{op}$-equivariant, which is to say that the square 
    \[\begin{tikzcd}
    	{R(O)} &&& {\Phi(X)(O) = \Top^D(O,X)} \\
    	\\
    	{R(P)} &&& {\Phi(X)(P) = \Top^D(P,X)}
    	\arrow["{R(\sigma)}", from=3-1, to=1-1]
    	\arrow["{g(P)}"', from=3-1, to=3-4]
    	\arrow["{g(O)}", from=1-1, to=1-4]
    	\arrow["{\Top^D(\sigma,X) = (- \circ \sigma)}"', from=3-4, to=1-4]
    \end{tikzcd}\] commutes, where $\sigma: O \rightarrow P$ is any map of $D$-orbits. (Note the direction of the vertical arrows; $R$ and $\Phi(X)$ are contravariant.)
    
    Let $r$ be a generic element of $R(P)$. We can see $g(O)(R(\sigma)(r))$ and $g(P)(r) \circ \sigma$ are the same element of $\Top^D(O,X)$ by the following:
        \begin{enumerate}
            \item For any object $d \in D$ and point $o_d \in O_D$, we know that $\sigma \circ o_d^* = (\sigma(o_d))^*$ because they are both $D$-equivariant maps from $F^d$ that agree on $\id_d$.
            \item Thus, applying $R$ to both sides, we get that $R(o_d^*) \circ R(\sigma)$ and $R((\sigma(o_d))^*)$ are equal as functions from $R(P)$ to $R(F^d)$. Hence, for any $r \in R(P)$, $R(o_d^*) \circ R(\sigma)(r) = R((\sigma(o_d))^*)(r)$.
            \item Since $R(F^d) = K(R)_d$ by the definition of $K$, we can post-compose $f$ to both sides and see that \[f(R(o_d^*) \circ R(\sigma)(r)) = f(R((\sigma(o_d))^*)(r)).\]
            \item But by definition of $g$, the left-hand side of this equation is $g(O)(R(\sigma)(r))(o_d)$, and the right-hand side is $g(P)(r) \circ \sigma (o_d)$. Because this holds for all possible $r$ and $o_d$, $g$ is indeed $\mathcal{O}_{\mathcal{F}}^{op}$-equivariant.
        \end{enumerate}

    Having constructed the adjunct $g$, let us now show that the assignment of $f:K(R) \rightarrow X$ to $g:R \rightarrow \Phi(X)$ as described above yields a bijection $\Top^D(K(R),X) \cong \Top^{\mathcal{O}_{\mathcal{F}}^{op}}(R,\Phi (X))$. To see that the assignment is injective, observe that if $f_1, f_2: K(R) \rightarrow X$ differ at $r \in K(R)_d = R(F^d)$, then the corresponding $g_1$ and $g_2$ differ because $g_i(F^d)(r)(\id_d) = f_i(r)$.
    
    To show surjectivity, we will demonstrate that any $g:R \rightarrow \Phi(X)$ has an $f:K(R) \rightarrow X$ assigned to it, namely the one defined by \[f(r) = ([g(F^d)](r))_d(\id_d)\] for any $r \in R(F^d) = K(R)_d$.
    
    If we were to continue with the notation just used, the rest of the proof would be quite cumbersome. It's time to simplify:
    
        \notn From now on, $([g(O)](r))_d(o_d)$ will be denoted $g(O)(r)(o_d)$. (In particular, the domain of $g(O)(r)$, treated as a continuous map, will be implicit from the argument.)
    
    Resuming the proof of surjectivity, we first note that this $f$ is indeed $D$-equivariant because the naturality of $g$ in $F^d$ gives that $f$ is natural in $d$, and that $f_d$ is continuous because $g(F^d)$ is. To finish proving surjectivity, we just need to show that $f$ is actually assigned to $g$, which we do as follows:
    \begin{enumerate}
        \item The adjunct $\widehat{g}$ that $f$ is assigned to is defined by \[\widehat{g}(O)(r_O)(o_d) = f(R(o_d^*)(r_O))\] for any $r_O \in R(O)$. However, we've defined $f$ above such that \[f(R(o_d^*)(r_O)) = (g(F^d)(R(o_d^*)(r_O))(\id_d).\] 
        \item By naturality of $g$ in $O$, we know that $\widehat{g}(O)(r_O) \circ o_d^*$ and $g(F^d) \circ (R(o_d^*)(r_O))$ are equal as elements of $\Top^D(F^d,X)$. In particular, they agree on the evaluation of $\id_d$, which means \[\widehat{g}(O)(r_O)(o_d) = g(F^d)(R(o_d^*)(r_O))(\id_d) = g(O)(r)(o_d).\] Hence, $\widehat{g} = g$, so the generic assignment of $f$ to $g$ is surjective.
    \end{enumerate}

    Finally, to complete the proof of the adjunction, we just need to show that the bijection (isomorphism of sets) $\Top^D(K(R),X) \cong \Top^{\mathcal{O}_{\mathcal{F}}^{op}}(R,\Phi (X))$ is natural in both $X$ and $R$:
    \begin{enumerate} 
        \item Let $\alpha: X \rightarrow Y$ be a map of $D$-spaces. To show naturality  in $X$, we will check that the diagram 
        \[\begin{tikzcd}
        	{\Top^D(K(R),X)} && {\Top^{\mathcal{O}_{\mathcal{F}}^{op}}(R,\Phi(X))} \\
        	\\
        	{\Top^D(K(R),Y)} && {\Top^{\mathcal{O}_{\mathcal{F}}^{op}}(R,\Phi(Y))}
        	\arrow["\cong", from=1-1, to=1-3]
        	\arrow["\cong", from=3-1, to=3-3]
        	\arrow["{\alpha \circ -}"', from=1-1, to=3-1]
        	\arrow["{\Phi(\alpha) \circ -}"', from=1-3, to=3-3]
        \end{tikzcd}\] commutes. Consider any $f \in \Top^D(K(R),X)$, which thus has adjunct $g \in \Top^{\mathcal{O}_{\mathcal{F}}^{op}}(R,\Phi (X))$ defined by $g(O)(r)(h_d) = f(R(o_d^*)(r))$, for all $O \in \mathcal{O}_{\mathcal{F}}$, $r \in R(O)$, and $o_d \in O_d$. By definition of $\Phi$, the composition $\Phi(\alpha) \circ g$ thus satisfies $\Phi(\alpha) \circ g(O)(r)(o_d) = \alpha \circ f(R(o_d^*)(r))$. But $\alpha \circ f(R(o_d^*)(r))$ is precisely the formula that defines that adjunct to $\alpha \circ f \in \Top^D(K(R),Y)$. Hence, the diagram commutes, so $\Top^D(K(R),X) \cong \Top^{\mathcal{O}_{\mathcal{F}}^{op}}(R,\Phi (X))$ is natural in $X$.

        \item Similarly, let $\gamma:R \rightarrow S$ be a map of $\mathcal{O}_{\mathcal{F}}^{op}$-spaces. To show naturality in $R$, we will check that the diagram
        \[\begin{tikzcd}
        	{\Top^D(K(R),X)} && {\Top^{\mathcal{O}_{\mathcal{F}}^{op}}(R,\Phi(X))} \\
        	\\
        	{\Top^D(K(S),X)} && {\Top^{\mathcal{O}_{\mathcal{F}}^{op}}(S,\Phi(X))}
        	\arrow["\cong", from=1-1, to=1-3]
        	\arrow["\cong", from=3-1, to=3-3]
        	\arrow["{- \circ K(\gamma)}", from=3-1, to=1-1]
        	\arrow["{- \circ \gamma}", from=3-3, to=1-3]
        \end{tikzcd}\] commutes. (Note the direction of the vertical arrows.) We know that any $f \in \Top^D(K(S),X)$ is assigned to the adjunct $g \in \Top^{\mathcal{O}_{\mathcal{F}}^{op}}(S,\Phi (X))$ defined by $g(O)(s)(o_d) = f(S(o_d^*)(s))$ for all $O \in \mathcal{O}_{\mathcal{F}}$, $s \in S(O)$, and $o_d \in O_d$. The composition $g \circ \gamma$ then satisfies \[(g \circ \gamma) (O)(r)(o_d) = g(O)(\gamma(r))(o_d) = f(S(o_d^*)(\gamma(s)))f(\gamma \circ R(o_d^*)(\gamma(s))).\] (The first equation is the definition of $g \circ \gamma$, the second follows by plugging $\gamma(r)$ into the adjunct formula, and the third is given by the fact that $\gamma(R) = S$.) But $f(\gamma \circ R(o_d^*)(\gamma(s)))$ is precisely the formula that defines the adjunct of $f \circ K(\gamma)$. Hence, $\Top^D(K(R),X) \cong \Top^{\mathcal{O}_{\mathcal{F}}^{op}}(R,\Phi (X))$ is natural in $R$.
    \end{enumerate}
    
    Thus, we've completed the proof of that $(K,\Phi)$ is an adjunction.
    
    \end{proof}

\begin{lemma}
$(K,\Phi)$ is a Quillen adjunction.
\end{lemma}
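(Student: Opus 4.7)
The plan is to verify the standard criterion for a Quillen adjunction by showing that the right adjoint $\Phi$ preserves fibrations and trivial fibrations. The two model structures have been set up in exactly the right way to make this almost tautological.

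First, I would unfold the definition of $\Phi$. For any $D$-equivariant map $\alpha\colon X \to Y$ and any $O \in \mathcal{O}_{\mathcal{F}}$, the component of $\Phi(\alpha)$ at $O$ is precisely the map $\Top^D(O, \alpha)\colon \Top^D(O, X) \to \Top^D(O, Y)$. Comparing this with Definition \ref{projmodel}: $\Phi(\alpha)$ is a fibration (respectively, weak equivalence) in the projective model structure on $\Top^{\mathcal{O}_{\mathcal{F}}^{op}}$ precisely when each component $\Top^D(O, \alpha)$ is a Serre fibration (respectively, weak homotopy equivalence) for every $O \in \mathcal{F}$. By Definition \ref{Fmodel}, that is exactly the condition that $\alpha$ itself be a fibration (respectively, weak equivalence) in the $\mathcal{F}$-model structure on $\Top^D$. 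Thus $\Phi$ preserves, and in fact reflects, both fibrations and weak equivalences, and in particular sends trivial fibrations to trivial fibrations, which is the defining criterion for a Quillen adjunction.

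The main subtlety is not in the verification above but in what is being tacitly assumed: for this argument to have content one needs the $\mathcal{F}$-model structure on $\Top^D$ to genuinely be a model structure, since Definition \ref{Fmodel} only specifies its fibrations and weak equivalences. The expected route is to transfer the projective model structure along the adjunction $(K,\Phi)$ via a standard lifting theorem of Kan type, checking that $\Top^D$ admits the small-object argument and that relative $K$-cell attachments coming from the generating acyclic cofibrations of $\Top^{\mathcal{O}_{\mathcal{F}}^{op}}$ remain weak equivalences in $\Top^D$. The hypothesis that $\mathcal{F}$ contain all free orbits likely enters at this stage, ensuring that $K$ carries the generating projective cofibrations to the $D$-cell attachments of Section \ref{section:complexes}; that transfer argument is the one place where genuine work, rather than unwinding of definitions, is needed.
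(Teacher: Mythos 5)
Your proof matches the paper's argument exactly: both verify that $\Phi$ preserves fibrations and trivial fibrations by unwinding $\Phi(\alpha)(O) = \Top^D(O,\alpha)$ and comparing Definitions \ref{projmodel} and \ref{Fmodel}, concluding that $\Phi$ in fact preserves and reflects both classes. Your closing caveat about needing the $\mathcal{F}$-structure to actually be a model structure is a fair observation, but the paper treats that as established prior to this lemma, so it lies outside the scope of this particular proof.
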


    \begin{proof}
    One of the equivalent conditions for an adjunction to be a Quillen adjunction is that the right adjoint, $\Phi$, preserve fibrations and trivial fibrations (that is, fibrations that are also weak equivalences). Recall from Definitions \ref{projmodel} and \ref{Fmodel} that the model structures we're using are:
        \begin{itemize}
            \item The weak equivalences (or fibrations) in $\Top^D$ are maps $\beta: X \rightarrow Y$ such that $\Top^D(O,\beta):\Top^D(O,X) \rightarrow \Top^D(O,Y)$ is a weak equivalence (or fibration) in $\Top$ for all $O \in \mathcal{F}$.
            \item The weak equivalences (or fibrations) in $\Top^{\mathcal{O}_{\mathcal{F}}^{op}}$ are maps $\gamma: R \rightarrow S$ such that $\gamma(O):R(O) \rightarrow S(O)$ is a weak equivalence (or fibration) in $\Top$ for all $O \in \mathcal{F}$.
        \end{itemize}
     We observe from this description that, since $\Phi(X)(O) = \Top^D(O,X)$, a $D$-equivariant map $\beta: X \rightarrow Y$ is a weak equivalence (resp. fibration) if and only if $\Phi(\beta):\Phi(X) \rightarrow \Phi(Y)$ is an equivalence (resp. fibration). Hence, $\Phi$ preserves weak equivalences and fibrations, and thus also trivial fibrations.
    \end{proof}

We're now able to describe the cofibrations in the $\mathcal{F}$-model structure:

    \prop\label{FCofibs} Any relative cell complex $\alpha: X_0 \rightarrow X$ of type $\mathcal{F}$ is a cofibration in $\Top^D$ under the $\mathcal{F}$-model structure.
        \begin{proof}
            The left adjoint in a Quillen adjunction preserves cofibrations. It also preserves pushouts and general colimits. Thus, any $D$-cell complex $X$ of type $\mathcal{F}$ is the image under $K$ of a $\Top^{\mathcal{O}_{\mathcal{F}}^{op}}$-cell complex of type $\Free$. (A $D$-orbit in $\mathcal{F}$ is a free $\mathcal{O}_{\mathcal{F}}^{op}$-orbit under the Yoneda embedding. Thus, a $D$-cell complex where $D^n \times O_{\mu}$ is attached at the $\mu$th stage is hit by a $\Top^{\mathcal{O}_{\mathcal{F}}^{op}}$-cell complex where $\Top^{\mathcal{O}_{\mathcal{F}}^{op}}(O_{\mu},-) \times D^n$ is attached at the $\mu$th stage.) 
        \end{proof}

We can now finish the proof of the theorem:

\begin{thm}\label{elminduction} 
$(K, \Phi)$ is a Quillen equivalence.
\end{thm}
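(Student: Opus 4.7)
The plan is to reduce the Quillen equivalence condition to showing that the unit $\eta_R \colon R \to \Phi K(R)$ is a weak equivalence for every cofibrant $R$, and then to verify this by transfinite induction on cellular structure.

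First, I would note that the definitions of the two model structures immediately give that $\Phi$ both preserves \emph{and reflects} weak equivalences: a morphism $\alpha$ in $\Top^D$ is a weak equivalence precisely when $\Top^D(O, \alpha) = \Phi(\alpha)(O)$ is a weak equivalence in $\Top$ for every $O \in \mathcal{F}$, which is exactly the objectwise condition for $\Phi(\alpha)$ to be a weak equivalence in the projective model structure. Given any $f \colon K(R) \to X$, one of the triangle identities factors its adjunct as $\hat f = \Phi(f) \circ \eta_R$. Hence if $\eta_R$ is a weak equivalence, then $\hat f$ is a weak equivalence if and only if $\Phi(f)$ is, if and only if $f$ is. Combined with the already-established isomorphism $K\Phi \cong \id_{\Top^D}$ (so that the counit is invertible), this is exactly the criterion for a Quillen equivalence.

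The task thus reduces to showing $\eta_R$ is a weak equivalence for every cofibrant $R$. Since cofibrant objects in the projective model structure are retracts of cell complexes built from the free orbits $F^O \coloneqq \mathcal{O}_{\mathcal{F}}^{op}(O, -)$, and since weak equivalences are stable under retracts, it suffices to prove that $\eta$ is an isomorphism at each stage of such a cell complex. The base case is a direct Yoneda computation: $K(F^O)_d = \mathcal{O}_{\mathcal{F}}^{op}(O, F^d) \cong \Top^D(F^d, O) \cong O_d$, so $K(F^O) \cong O$, and $\Phi K(F^O)(P) = \Top^D(P, O) = F^O(P)$ with $\eta_{F^O}$ the identity. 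For a cell attachment, $K$ preserves the defining pushout since it is a left adjoint, while $\Phi$ evaluated at any $P \in \mathcal{O}_{\mathcal{F}}$ is the representable $(-)^P$, which preserves pushouts and coproducts by Proposition \ref{OfixedPushout} because $P$ is an orbit; since pushouts in $\Top^{\mathcal{O}_{\mathcal{F}}^{op}}$ are computed objectwise, $\Phi K$ preserves the pushout. Naturality of $\eta$ together with the fact that it is an isomorphism on each cell and on the previous stage forces it to be an isomorphism on the new stage. The limit-ordinal step is analogous, once one observes that $K$ preserves filtered colimits and that each orbit $P$ is small with respect to the closed inclusions arising from cell attachments, so that $\Top^D(P,-)$ commutes with these transfinite composites.

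The main obstacle is controlling the interaction of $\Phi$, a right adjoint, with the colimits used to construct cofibrant objects. Fortunately Proposition \ref{OfixedPushout} supplies precisely the preservation of pushouts and coproducts needed at the cell-attachment stage, and the passage through limit ordinals reduces, after objectwise evaluation, to a standard smallness argument about orbits.
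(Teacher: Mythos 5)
Your proposal follows the same overall strategy as the paper: reduce to showing the unit $\eta_R$ is an isomorphism for cofibrant $R$ (using that $\Phi$ preserves and reflects weak equivalences and the triangle identity $\hat f = \Phi(f)\circ\eta_R$), then verify this by transfinite induction on the cellular filtration, with the successor step handled by the Yoneda computation and Proposition~\ref{OfixedPushout}, and the limit step by a factorization argument.

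The one place where your sketch conceals a genuine difficulty is the limit-ordinal step. You write that "each orbit $P$ is small with respect to the closed inclusions arising from cell attachments" and refer to this as "a standard smallness argument about orbits." But orbits in $\mathcal{F}$ have no compactness, finiteness, or presentability property that would underwrite a standard smallness argument: for $D = \mathbb{J}$, an orbit $O$ may have $O_s$ equal to an arbitrary topological space. The reason $\Top^D(O,-)$ nonetheless commutes with the relevant transfinite composites is not generic smallness but the orbit structure itself. Because $\Colim(O)$ is a point, any $D$-map $f\colon O \to \Colim_{\nu<\lambda}\widetilde{R}_\nu$ factors set-theoretically through a single orbit of the target (this is exactly the pullback description in Definition~\ref{orbitofx}), and each orbit of the colimit is introduced in its entirety as the interior of a single cell $P\times D^n$ at one stage $\nu$; one then uses that cellular inclusions are closed to get continuity of the factorization. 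That orbit-theoretic argument is the real content of the limit case, and it is the genuinely new ingredient relative to the non-equivariant or compact-Lie group setting; calling it "standard" passes over the step that would fail if one reached for the usual compactness-based smallness.
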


    \begin{proof}
    We need to show that, for any cofibrant $R \in \Top^{\mathcal{O}_{\mathcal{F}}^{op}}$ and fibrant $X \in \Top^D$, any $D$-equivariant map $f:K(R) \rightarrow X$ is a weak equivalence if and only if its adjunct $g:R \rightarrow \Phi(X)$ is a weak equivalence. But by how we've defined our weak equivalences, $f$ is a weak equivalence if and only if $\Phi(f):\Phi K (R) \rightarrow \Phi(X)$ is. By the 2-of-3 property of weak equivalences and that fact that the unit of the adjunction at $R$, $\eta_R$, factors $\Phi(f)$ as $f \circ \eta_R$, it is sufficient (and necessary) to show that $\eta_R$ is a weak equivalence for all cofibrant $R \in \Top^{\mathcal{O}_{\mathcal{F}}^{op}}$. We will in fact show that $\eta_R$ is an \textit{isomorphism} for each cofibrant $R$.
    
    Recall from the discussion in Definition \ref{projmodel} that cofibrations in the projective model structure on $\Top^{\mathcal{O}_{\mathcal{F}}^{op}}$ are retracts of relative $\Top^{\mathcal{O}_{\mathcal{F}}^{op}}$-cell complexes of type $\Free$. Hence, every cofibrant $R$ can be realized as a retract of some $R'$, where $R'$ is a transfinite composition of pushouts of the form 
    \[\begin{tikzcd}
    	{\Top^{\mathcal{O}_{\mathcal{F}}^{op}}(O,-) \times A} && {\widetilde{R}_{\mu}} \\
    	\\
    	{\Top^{\mathcal{O}_{\mathcal{F}}^{op}}(O,-) \times B} && {\widetilde{R}_{\mu+1}}
    	\arrow["{\id_{\Top^{\mathcal{O}_{\mathcal{F}}^{op}}(O,-)} \times c}"', from=1-1, to=3-1]
    	\arrow[from=1-1, to=1-3]
    	\arrow[from=3-1, to=3-3]
    	\arrow[from=1-3, to=3-3]
    	\arrow["\lrcorner"{anchor=center, pos=0.125, rotate=180}, draw=none, from=3-3, to=1-1]
    \end{tikzcd}.\] Since $R $ is a retract of $R'$, we can show $\eta_R$ is an isomorphism by showing that $\eta_{R'}$ is an isomorphism. We will do this by transfinite induction: 
    
    Let $\lambda$ be an ordinal such that there is a functor $\widetilde{R}: \lambda \rightarrow \Top^{\mathcal{O}_{\mathcal{F}}^{op}}$ with colimit $R'$ and such that for all successor ordinals $\mu +1 < \lambda$, $\widetilde{R}_{\mu + 1}$ is the pushout given above. ($O$ is some orbit in  $\mathcal{O}_{\mathcal{F}}$ and $c: A \rightarrow B$ is a generating cofibration in $\Top$.
    
    \underline{Initial Case:} If $\lambda$ is the initial ordinal, then the colimit of $\lambda$ is the initial object of $\Top^{\mathcal{O}_{\mathcal{F}}^{op}}$. In this case, $R'(O)=\{ \}$ for all $O \in \mathcal{F}$. Hence, $K(R')_d = \{ \}$ for all objects $d \in D$, and consequently $\Phi K (R') (O) = \Top^D(O,K(R')) = \{ \}$ as all orbits have at least one point and there are no continuous maps from a non-empty to the empty space. This makes$\eta_{R'}$ an equality and in particular an isomorphism.
    
    \underline{Successor Case:} If $\lambda = \mu +1$ for some ordinal $\mu$, we inductively assume that $\eta_{\widetilde{R}_{\mu}}$ is an isomorphism. To see that $\eta_{R'}$ is an isomorphism, we will check that $\Phi K (-)$ preserves pushouts and that $\eta_{\Top^{\mathcal{O}_{\mathcal{F}}^{op}}(O,-) \times A}$ is an isomorphism for all $O \in \mathcal{F}$ and $A \in \Top$. For pushout-preservation, we first note that $K$ automatically preserves pushouts by being a left adjoint. Since colimits (and limits) in a functor category are computed object-wise, showing that $\Phi$ preserves pushouts is equivalent to showing that $\Top^D(O, Y \sqcup_X Z)$ is naturally isomorphic to $\Top^D(O,Y) \sqcup_{\Top^D(O,X)} \Top^D(O,Z)$. But this is immediate from Proposition \ref{OfixedPushout}, since pushouts are colimits.\\
    
    To complete the successor ordinal case, we just need to show that $\eta_{\Top^{\mathcal{O}_{\mathcal{F}}^{op}}(O,-) \times A}$ is an isomorphism for all $O \in \mathcal{F}$ and $A \in \Top$. To see this, note that \[K(\Top^{\mathcal{O}_{\mathcal{F}}^{op}}(O,-) \times A)_d = \Top^D(O,F^d) \times A \cong O_d \times A.\] The naturality of the Yoneda lemma in the second argument shows us that \[K(\Top^{\mathcal{O}_{\mathcal{F}}^{op}}(O,-) \times A) \cong O \times A.\] Applying $\Phi$ to both sides gives \[\Phi K(\Top^{\mathcal{O}_{\mathcal{F}}^{op}}(O,-) \times A) \cong \Phi (O \times A).\] Next, observe that, at any orbit $P \in \mathcal{F}$, \[\Phi (O \times A)(P) = \Top^D(P, O \times A) \cong \Top^D(P,O) \times A = \Top^{\mathcal{O}_{\mathcal{F}}^{op}}(O,P) \times A.\] (To see the natural isomorphism above, first observe that \[\Top^D(P, O \times A) \cong \Top^D(P,O) \times \Top^D(P,A)\] because representable functors preserve limits. Then, simplify by noting that $\Top^D(P,A) \cong A$ because $\textrm{colim}(P) = \{p\}$ is a one-point space and every morphism in $A$ is an identity morphism.) Since $\Top^D(P,O) \times A$  is the same space as $\Top^{\mathcal{O}_{\mathcal{F}}^{op}}(O,-) \times A$ evaluated at $P$, we conclude that $\Phi K(\Top^{\mathcal{O}_{\mathcal{F}}^{op}}(O,-) \times A)$ is naturally isomorphic to $\Top^{\mathcal{O}_{\mathcal{F}}^{op}}(O,-) \times A$. By construction, the natural isomorphism described above is precisely $\eta_{\Top^{\mathcal{O}_{\mathcal{F}}^{op}}(O,-) \times A}$, meaning the successor case is complete. 

    \underline{Limit Case}: Let $\lambda$ be a limit ordinal. We inductively assume that $\eta_{\widetilde{R}_{\nu}}$ is an isomorphism for all $\nu < \lambda$. Since $\lambda$ is a limit ordinal, $R' = \Colim_{\nu < \lambda}(\widetilde{R}_{\nu})$. As before, showing that $\eta_{R'}:\Phi K (R') \rightarrow R'$ is an isomorphism reduces to showing each $\eta_{R'}(O):\Phi K (R')(O) \rightarrow R'(O)$ is an isomorphism of spaces. Note that each $K(\widetilde{R}_{\nu})$ is a $D$-cell complex by construction, (Specifically, it's only built out of cells of orbit types $O \in \mathcal{F}$.) and $K(\widetilde{R}_{\nu}) \rightarrow K(\widetilde{R}_{\xi})$ is a $D$-cellular inclusion for any $\nu < \xi < \lambda$. Thus, since $K$ preserves colimits, we only need to check that $\Top^D(O,-)$ preserves colimits indexed by ordinals where each map is a $D$-cellular inclusion of $D$-cell complexes. Combined with the inductive hypothesis, this will show that $\eta_{R'}(O):\Phi K (R')(O) \rightarrow R'(O)$ is an isomorphism of spaces. 
    
    Let $f:O \rightarrow R' = \Colim_{\nu < \lambda}(\widetilde{R}_{\nu})$ be a $D$-equivariant map. We wish to find an ordinal $\nu < \lambda$ and map $g:O \rightarrow \widetilde{R}_{\nu}$ such that $g$ factors $f$. Let $o_d \in O_d$ be a point of $O$, and set $\nu$ to be the smallest ordinal such that $f(o_d) \in (\widetilde{R}_{\nu})_d$. Observe that when $f(o_d)$ was added to $\widetilde{R}_{\nu}$, it was added as the interior of some $D$-disk $Orb_{\mathcal{F}}^{op}(P,F^d) \times D^{n+1} \cong P \times D^{n+1}$. By equivariance and the colimit property of orbits, all points in $\widetilde{R}_{\nu}$ that are in the orbit of $f(o_d)$ must also have been in the interior of the new $(P \times D^{n+1})$-cell. Thus, all points in the orbit of $f(o_d)$ must lie in $\widetilde{R}_{\nu}$, which means that $f$ is indeed factored by a map $g:O \rightarrow \widetilde{R}_{\nu}$. This completes the limit case and the proof. 
\end{proof}

\bibliographystyle{alpha}
\bibliography{cited}

\end{document}